\begin{document}

\title[Vincular pattern avoidance on cyclic permutations]{Vincular pattern avoidance on cyclic permutations}
\author[Rupert Li]{Rupert Li}
\address{Massachusetts Institute of Technology, 77 Massachusetts Avenue, Cambridge, MA 02139, USA}
\email{rupertli@mit.edu}
\keywords{pattern avoidance, cyclic permutations, vincular patterns}
\subjclass[2020]{05A05}

\date{\today}
\begin{abstract}
    Pattern avoidance for permutations has been extensively studied, and has been generalized to vincular patterns, where certain elements can be required to be adjacent.
    In addition, cyclic permutations, i.e., permutations written in a circle rather than a line, have been frequently studied, including in the context of pattern avoidance.
    We investigate vincular pattern avoidance on cyclic permutations.
    In particular, we enumerate many avoidance classes of sets of vincular patterns of length 3, including a complete enumeration for all single patterns of length 3.
    Further, several of the avoidance classes corresponding to a single vincular pattern of length 4 are enumerated by the Catalan numbers.
    We then study more generally whether sets of vincular patterns of an arbitrary length $k$ can be avoided for arbitrarily long cyclic permutations, in particular investigating the boundary cases of minimal unavoidable sets and maximal avoidable sets.
\end{abstract}

\maketitle

\section{Introduction}\label{section: introduction}
Pattern containment and avoidance for permutations is a well-established branch of enumerative combinatorics; see Kitaev \cite{kitaev2011patterns} for a further introduction.
The study of pattern avoidance was generalized to vincular patterns in 2000 by Babson and Steingr\'imsson \cite{babson2000generalized}, where vincular patterns can additionally require some elements to be adjacent when considering whether a permutation contains the pattern; see Steingr\'imsson  \cite{steingrimsson2010generalized} for a survey of the study of vincular patterns, which he refers to as generalized patterns.

A frequently studied variant of permutations is cyclic permutations, where the permutation is written in a circle rather than a line.
Cyclic permutations are frequently encountered outside of the context of pattern avoidance; for a recent example, Kim and Williams \cite{kim2021schubert} used cyclic permutations to study the inhomogeneous totally asymmetric simple exclusion process on a ring.
In 2002, Callan \cite{callan2002pattern} initiated the study of pattern avoidance in cyclic permutations, enumerating the avoidance classes for all patterns of length 4.
In 2021, Domagalski, Liang, Minnich, Sagan, Schmidt, and Sietsema \cite{domagalski2021cyclic} extended Callan's work, enumerating the avoidance classes for all sets of patterns of length 4, where a permutation avoids a set of patterns if it avoids each pattern in the set.
Menon and Singh \cite{menon2021pattern} extend these results to patterns of higher length, principally investigating pairs of patterns, one of length 4 and the other of length $k$.

Domagalski et al.\ additionally initiated the study of vincular pattern avoidance in cyclic permutations, showing that the Catalan numbers appear as the sizes of a particular avoidance class of cyclic permutations.

In this paper, we provide a more thorough, foundational investigation of vincular pattern avoidance on cyclic permutations.
In \cref{section: vincular 3}, we enumerate the avoidance classes of all vincular cyclic patterns of length 3, the first nontrivial length to enumerate.
We extend this analysis in \cref{section: multiple vincular 3}, where we enumerate the avoidance classes of all sets of at least three vincular cyclic patterns of length 3, as well as some of the doubleton sets of patterns.
In particular, in \cref{subsection: multiple vincular 3 doubleton} we find one of the doubleton sets is equinumerous to the set of up-down permutations, and in \cref{subsection: multiple vincular 3 three+} we find the unique nonzero Wilf class of tripleton sets of patterns of length 3 has enumeration equivalent to finding the cardinality of the set of total extensions of a certain partial cyclic order, a circular analog of a poset, for which a recurrence is known.
In \cref{section: vincular 4}, we enumerate six of the eight trivial Wilf equivalence classes of vincular cyclic patterns of length 4 with a single vinculum, demonstrating that there are five Wilf equivalence classes;
combining this with the result by Domagalski, Liang, Minnich, Sagan, Schmidt, and Sietsema \cite{domagalski2021cyclic}, this leaves only one of the eight trivial Wilf equivalence classes unresolved.
Notably, we show that the cyclic permutations of a given length avoiding a member of two more trivial Wilf classes are enumerated by the Catalan numbers.

In \cref{section: unavoidable sets of totally vincular patterns,section: max avoidable sets of totally vincular patterns}, we investigate vincular pattern avoidance on cyclic permutations for patterns of general lengths.
In particular, we consider whether a given set of totally vincular patterns, i.e., patterns where the entire subsequence must be consecutive, is unavoidable, meaning no sufficiently long cyclic permutation can avoid this set.
In \cref{section: unavoidable sets of totally vincular patterns}, we consider the boundary cases of this property, namely the minimal unavoidable sets, which are sets of totally vincular patterns that are unavoidable but for which any proper subset is avoidable.
We demonstrate that one of the most natural families of unavoidable sets, namely the sets of patterns with a 1 at position $i$ for a fixed $i$, is minimal.
And in \cref{section: max avoidable sets of totally vincular patterns}, we consider the dual question, maximal avoidable sets, which are sets of totally vincular patterns that are avoidable but adding any additional pattern makes the set unavoidable.
We determine the maximum cardinality of any avoidable set.

Finally, we conclude in \cref{section: conclusion} with some remarks on areas for further research.

\section{Preliminaries}\label{section: preliminaries}
Let $a,b\in \Z$.
When we write the interval $[a,b]$, we will only be referring to the integers in that interval.
As is standard, we will use $[a]$ to denote the set $\{1,2,\dots,a\}$.

Let $S_n$ denote the set of permutations on $[n]$ for a positive integer $n$.
Any permutation $\pi \in S_n$ is said to have \emph{length} $n$, denoted by $|\pi|=n$.
A permutation $\pi$ will often be written in the one-line notation $\pi=\pi_1\cdots\pi_n$, where commas may be optionally inserted such as $\pi=\pi_1,\dots,\pi_n$ for sake of readability.

In particular, two of the simplest permutations of length $n$ are the increasing and decreasing permutations, which will appear throughout the paper, denoted
\[ \iota_n = 12\cdots n\]
and
\[ \delta_n = n\cdots21,\]
respectively.

Two sequences of distinct integers $\pi = \pi_1\cdots\pi_k$ and $\sigma = \sigma_1\cdots\sigma_k$ of the same length are \emph{order isomorphic}, denoted $\pi \cong \sigma$, if $\pi_i < \pi_j$ if and only if $\sigma_i < \sigma_j$ for $1 \leq i,j \leq n$.
The \emph{reduction} of a sequence of distinct integers $\pi = \pi_1\cdots\pi_n$ is the unique permutation $\sigma\in S_n$ such that $\pi \cong \sigma$; in other words, the values of the elements of $\pi$ are mapped to $[n]$ while preserving their relative order.

We now define pattern avoidance on permutations.
If $\sigma \in S_n$ and $\pi \in S_k$ for $k \leq n$, then $\sigma$ \emph{contains} $\pi$ as a pattern if there is a subsequence $\sigma'$ of $\sigma$ with $|\sigma'|=k$ such that $\sigma' \cong \pi$.
If no such subsequence exists, then $\sigma$ is said to \emph{avoid} $\pi$.
The \emph{avoidance class} of $\pi$ is
\[ \Av_n(\pi)=\{\sigma\in S_n \mid \sigma \text{ avoids } \pi\}.\]
We extend this definition to avoidance classes of sets of permutations $\Pi$ by defining
\[ \Av_n(\Pi) = \bigcap_{\pi \in \Pi} \Av_n(\pi).\]

The \emph{reverse} of a permutation $\pi=\pi_1\cdots\pi_n$ is $\pi^r = \pi_n \cdots \pi_1$.
We define the \emph{plot} of a permutation $\pi$ to be the sequence of points $(i,\pi_i)$ in the Cartesian plane.
Reversal then corresponds to reflecting the plot of a permutation across a vertical axis.
Similarly, reflection across a horizontal axis corresponds to the \emph{complement} of $\pi$, given by
\[ \pi^c = n+1-\pi_1, n+1-\pi_2, \dots, n+1-\pi_n. \]
Combining these two operations gives the \emph{reverse complement}
\[ \pi^{rc} = n+1-\pi_n, \dots, n+1-\pi_1,\]
which corresponds to rotation by 180 degrees.
We can apply any of these three operations to sets of permutations $\Pi$ by applying them to each element of $\Pi$.
In other words, we have
\begin{align*}
    \Pi^r &= \{ \pi^r \mid \pi \in \Pi\} \\
    \Pi^c &= \{ \pi^c \mid \pi \in \Pi\} \\
    \Pi^{rc} &= \{ \pi^{rc} \mid \pi \in \Pi\}.
\end{align*}

We say that two patterns $\pi$ and $\pi'$ are \emph{Wilf equivalent}, written $\pi \equiv \pi'$, if for all $n \geq 1$, we have $\left|\Av_n(\pi)\right|=\left|\Av_n(\pi')\right|$.
We extend this definition naturally to sets of patterns, denoted $\Pi \equiv \Pi'$.
It is easy to see that $\pi \equiv \pi^r \equiv \pi^c \equiv \pi^{rc}$ for any pattern $\pi$, so these are called \emph{trivial Wilf equivalences}.
This naturally generalizes to trivial Wilf equivalences for sets of patterns: $\Pi \equiv \Pi^r \equiv \Pi^c \equiv \Pi^{rc}$.
These relations form \emph{trivial Wilf equivalence classes}.

For $\sigma=\sigma_1\cdots\sigma_n\in S_n$, let a \emph{rotation} of $\sigma$ be any permutation $\tau\in S_n$ of the form
\[ \tau = \sigma_k\sigma_{k+1}\cdots\sigma_n\sigma_1\cdots\sigma_{k-1}\]
for some $k \in [n]$.
We define the \emph{cyclic permutation} corresponding to $\sigma \in S_n$ to be the set of all rotations of $\sigma$, denoted
\[ [\sigma] =\{\sigma_1\cdots\sigma_n,\sigma_2\cdots\sigma_n\sigma_1,\dots,\sigma_n\sigma_1\cdots\sigma_{n-1} \}. \]
For example,
\[ [123] = \{123, 231, 312\} = [231] = [312].\]
We use square brackets to denote the cyclic analog of objects defined in the linear case, and using this notation, we let $[S_n]$ denote the set of cyclic permutations of length $n$.
Notice that $\left|[S_n]\right| = (n-1)!$.
To avoid confusion, we may refer to permutations from $S_n$ as \emph{linear} permutations, as opposed to cyclic permutations.
The \emph{length} of a cyclic permutation $[\sigma]$ is simply the length of $\sigma$, so any permutation $[\sigma]\in[S_n]$ has length $n$, even though one may view a cyclic permutation as being arranged in a circle and thus lacking endpoints.

We now define pattern avoidance on cyclic permutations, the natural analog to the definition of pattern avoidance in the linear case.
If $[\sigma]\in [S_n]$ and $[\pi]\in [S_k]$ for $k \leq n$, then $[\sigma]$ \emph{contains} $[\pi]$ as a pattern if some element $\sigma' \in [\sigma]$, i.e., some rotation $\sigma'$ of $\sigma$, contains $\pi$ as a pattern, using the linear definition of pattern avoidance.
Otherwise, $[\sigma]$ is said to \emph{avoid} $[\pi]$.
Intuitively, a cyclic permutation can be written in a circle rather than a line as in the linear case, and a cyclic permutation contains a pattern if some subsequence, going once around the circle, is order isomorphic to the pattern.
The formal definition of a cyclic permutation being the set of rotations of a linear rotation enforces that one cannot loop around the circle multiple times; if this were allowed, then any cyclic permutation of length $n$ would trivially contain any pattern of length $k$ for $k \leq n$.

The \emph{avoidance class} of $[\pi]$ is similarly defined to be
\[ \Av_n[\pi] = \{[\sigma] \in [S_n] \mid [\sigma] \text{ avoids } [\pi] \}. \]
As before, we extend this definition to avoidance classes of sets of cyclic permutations $[\Pi]$ by defining
\[ \Av_n[\Pi] = \bigcap_{[\pi]\in[\Pi]} \Av_n[\pi]. \]
For simplicity, when working with an explicit set of patterns we may omit the curly brackets for the set; for example,
\[ \Av_n[1234,1243] = \Av_n[\{1234,1243\}].\]

Wilf equivalences for cyclic permutations and sets of cyclic permutations are defined analogously to the linear case.
In particular, we still have the trivial Wilf equivalences: for all $[\pi]$ and $[\Pi]$, we have
\begin{align*}
    [\pi] \equiv [\pi^r] &\equiv [\pi^c] \equiv [\pi^{rc}] \\
    [\Pi] \equiv [\Pi^r] &\equiv [\Pi^c] \equiv [\Pi^{rc}].
\end{align*}

Lastly, we introduce vincular patterns and vincular pattern avoidance.
We consider $\pi$ as a vincular pattern if, when determining which permutations $\sigma$ avoid $\pi$, we only consider subsequences $\sigma'$ of $\sigma$ where certain adjacent elements of $\pi$ are also adjacent in $\sigma'$ when $\sigma'$ is embedded within $\sigma$.
Such adjacent elements are overlined in $\pi$, and each adjacency requirement, i.e., each adjacent pair of elements that are overlined together, is referred to as a vinculum.
When two vincula are themselves adjacent, the overlines are combined into one longer overline.
For example, $\sigma=34251$ contains two subsequences order isomorphic to $\pi = 213$, namely $325$ and $425$, but only $425$ is a copy of $\pi'=\overline{21}3$, because the 4 and the 2 are adjacent.
In fact, $425$ is a copy of $\pi''=\overline{213}$ as well, where $\pi'=\overline{21}3$ is said to have one vinculum and $\pi''=\overline{213}$ has two vincula.
Notice that the vincula do not have to be adjacent themselves, as we can have a vincular pattern like $\pi=\overline{12}\,\overline{34}5\overline{678}$, which has four vincula.
Classical patterns can be seen as vincular patterns with no vincula.

Vincular pattern avoidance, avoidance classes, and Wilf equivalences are defined analogously.
In particular, these vincular notions apply to cyclic patterns and permutations as well, without change.
For example, Domagalski, Liang, Minnich, Sagan, Schmidt, and Sietsema \cite{domagalski2021cyclic} proved that $\left|\Av_n[13\overline{24}]\right|$, the number of cyclic permutations of $[n]$ avoiding $[13\overline{24}]$, equals $C_{n-1}$, where $C_n$ denotes the $n$th Catalan number.

We would like to note one difference between pattern avoidance in the vincular case, as opposed to the non-vincular case.
This observation applies to both linear and cyclic permutations.
Without vincula, we have the simple but oftentimes useful property that if $\sigma$ avoids a pattern $\pi$, then any subsequence $\sigma'$ of $\sigma$ also avoids $\pi$.
But this is not necessarily the case when $\pi$ is a vincular pattern, as removing elements from $\sigma$ changes the adjacency relations of the other elements.
For example, $\sigma = 1423$ avoids $\overline{12}3$, but the subsequence $\sigma' = 123$ contains $\overline{12}3$.
However, with some additional caution, this reasoning can sometimes still be applied: for an example, see the proof of \cref{theorem: vincular 4 one vinculum D}.

\section{Vincular cyclic patterns of length 3}\label{section: vincular 3}
Before we address vincular cyclic patterns of length 3, let us address the two vincular cyclic patterns of length 2, $[\overline{12}]$ and $[\overline{21}]$.
It is easy to see that every $[\sigma]$ of length $|\sigma| \geq 2$ contains both $[\overline{12}]$ and $[\overline{21}]$, for every cyclic permutation of length at least two must contain at least one ascent and one descent.
Hence, we have the following proposition.
\begin{proposition}\label{proposition: vincular 2}
We have $\left|\Av_n[\overline{12}]\right|=\left|\Av_n[\overline{21}]\right|=\begin{cases} 1 & n = 1 \\ 0 & n \geq 2.\end{cases}$
\end{proposition}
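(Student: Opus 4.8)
The plan is to establish the two claimed cardinalities separately: the trivial base case $n=1$, and the main assertion that $\Av_n[\overline{12}] = \Av_n[\overline{21}] = \emptyset$ for all $n \geq 2$. For $n=1$, the only cyclic permutation is $[1]$, which has length $1$, too short to contain any pattern of length $2$, so it vacuously avoids both $[\overline{12}]$ and $[\overline{21}]$, giving cardinality $1$ in each case.

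For $n \geq 2$, I would argue directly that every cyclic permutation $[\sigma] \in [S_n]$ contains both $[\overline{12}]$ and $[\overline{21}]$. Fix a representative $\sigma = \sigma_1 \cdots \sigma_n$. Since the values $\sigma_1, \dots, \sigma_n$ are distinct, consider the positions of the maximum value $n$ and the minimum value $1$ within the circular arrangement; or more simply, observe that as we traverse the $n$ cyclically-adjacent pairs $(\sigma_1,\sigma_2), (\sigma_2,\sigma_3), \dots, (\sigma_{n-1},\sigma_n), (\sigma_n,\sigma_1)$, not all of them can be descents (else the values would strictly decrease all the way around the circle, an impossibility), and not all can be ascents, so there is at least one cyclically-adjacent ascent $\sigma_i < \sigma_{i+1}$ and at least one cyclically-adjacent descent $\sigma_j > \sigma_{j+1}$ (indices mod $n$). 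An ascent $\sigma_i \sigma_{i+1}$ is precisely a copy of $\overline{12}$ in the rotation of $\sigma$ beginning at position $i$, hence $[\sigma]$ contains $[\overline{12}]$; similarly the descent gives a copy of $\overline{21}$, so $[\sigma]$ contains $[\overline{21}]$. Therefore $\Av_n[\overline{12}]$ and $\Av_n[\overline{21}]$ are both empty, completing the proof.

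I do not anticipate a genuine obstacle here; the only point requiring a little care is making sure the adjacency condition of the vinculum is met by the cyclic structure — that is, confirming that a cyclically-adjacent ascent in $\sigma$ really does furnish an honest (linearly) adjacent copy of $\overline{12}$ inside some rotation of $\sigma$, which is immediate from the definition of containment for cyclic vincular patterns given in the preliminaries. One should also note explicitly that this uses $n \geq 2$ so that cyclically-adjacent pairs exist and the pattern length does not exceed $n$.
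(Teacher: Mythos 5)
Your proof is correct and matches the paper's reasoning exactly: the paper also disposes of $n=1$ trivially and, for $n\geq 2$, observes that every cyclic permutation must contain at least one ascent and one descent, which immediately yield copies of $[\overline{12}]$ and $[\overline{21}]$. You have simply written out the "not all pairs can be ascents/descents" justification that the paper leaves as "easy to see."
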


We now address vincular cyclic patterns of length 3 with a single vinculum.
\begin{theorem}\label{theorem: vincular 3 one vinculum}
For vincular cyclic patterns $\pi$ of length 3 containing one vinculum, we have $\left|\Av_n[\pi]\right|=1$ for all $n \geq 1$.
\end{theorem}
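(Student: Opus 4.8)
The plan is to show that for each vincular cyclic pattern $[\pi]$ of length $3$ with a single vinculum, the only cyclic permutation of length $n$ avoiding $[\pi]$ is the one we would expect — and indeed that $|\Av_n[\pi]| = 1$. First I would reduce the number of cases using the trivial Wilf equivalences $[\pi] \equiv [\pi^r] \equiv [\pi^c] \equiv [\pi^{rc}]$, together with the observation that on a cycle all six linear patterns $123, 231, 312, 132, 213, 321$ collapse: $[123] = [231] = [312]$ and $[132] = [213] = [321]$. So there are only two underlying cyclic patterns of length $3$ (the cyclic "ascent-triple" $[123]$ and the cyclic "$132$"), and placing a single vinculum gives a small handful of representatives up to symmetry, such as $[\overline{12}3]$ and $[\overline{13}2]$; I would check that the symmetry operations $r, c, rc$ act transitively enough to leave essentially one or two genuinely distinct cases to analyze directly.

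For the core argument, fix a representative, say $[\pi] = [\overline{12}3]$. Suppose $[\sigma] \in [S_n]$ avoids $[\pi]$. I would work with a concrete rotation and track where the value $n$ sits. The key local move: if $\sigma$ contains an ascent $\sigma_i < \sigma_{i+1}$ (two adjacent positions on the circle forming the vinculum), then avoidance of $[\overline{12}3]$ forces there to be no element larger than $\sigma_{i+1}$ anywhere else on the cycle — otherwise that larger element, read after $\sigma_i \sigma_{i+1}$ going once around, completes a copy of $\overline{12}3$. This pins down $\sigma_{i+1} = n$. Since every cyclic permutation of length $\geq 2$ has at least one ascent (Proposition~\ref{proposition: vincular 2}), we conclude $n$ is immediately preceded on the circle by a smaller element, and moreover that this ascent is essentially unique in a strong sense. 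Iterating this reasoning — deleting $n$ and arguing that the reduced configuration must again be forced, while being careful about the vincular subtlety that deleting an element changes adjacencies — I expect to show by induction on $n$ that $\sigma$ must be (a rotation of) the decreasing permutation $\delta_n = n(n-1)\cdots 1$, which has exactly one ascent (namely $1$ followed by $n$ around the cycle) and which one checks directly avoids $[\overline{12}3]$. Hence $|\Av_n[\overline{12}3]| = 1$, realized by $[\delta_n]$.

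The main obstacle, and where I would spend the most care, is the induction step in the presence of the vincular condition: the general warning in the preliminaries (that a subpermutation of a $\pi$-avoider need not avoid $\pi$ when $\pi$ is vincular) means I cannot simply say "delete $n$ and apply the inductive hypothesis." Instead I would argue that deleting the maximum $n$ merges the two positions adjacent to $n$ on the circle into a single new adjacency, and then show this new adjacency cannot create a forbidden ascent-at-a-vinculum in the reduced word unless the original word already contained $[\overline{12}3]$ — roughly, because the element formerly after $n$ is smaller than $n$, so any copy of $\overline{12}3$ using the new adjacency lifts to a copy in the original word using $n$ as the "$3$". Making this lifting argument airtight (handling the boundary case where the copy's "large" element is exactly one of the merged neighbors) is the crux. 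The remaining representative(s), such as $[\overline{13}2]$, should succumb to the mirror-image version of the same argument via $c$ or $rc$, so once the first case is done the rest is bookkeeping; I would state the case reductions explicitly and then give the forcing argument in full for the one representative.
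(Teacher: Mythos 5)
Your core observation for $[\overline{12}3]$ --- that any adjacent ascent $\sigma_i<\sigma_{i+1}$ forces $\sigma_{i+1}=n$, since otherwise a larger element encountered later around the circle completes a copy --- is correct and is essentially the paper's forcing argument. But you then abandon the win: since $n$ has a unique predecessor, this observation already shows $[\sigma]$ has exactly one cyclic ascent, and a cyclic permutation with exactly one ascent is $[\delta_n]$; no induction is needed. The induction you propose instead has a genuine gap at the step you yourself identify as the crux. If deleting $n$ merges $a$ (the predecessor of $n$) with $b$ (the successor of $n$) and $a<b<c$ for some later $c$, this new copy does \emph{not} lift to a copy in the original word ``using $n$ as the $3$'': the only adjacency available in the original is $a\to n$, and nothing exceeds $n$, so there is no lifted occurrence. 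The reduced word genuinely could contain new occurrences a priori; the reason it does not is that avoidance forces $b=n-1$ (the predecessor of $n-1$ must be $n$, else the ascent into $n-1$ violates your own observation) --- but establishing that is just the direct argument again, so the induction buys nothing and, as sketched, does not close.

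The second issue is your treatment of the other case. The trivial Wilf classes here are $[\overline{12}3]\equiv[\overline{21}3]\equiv[\overline{23}1]\equiv[\overline{32}1]$ and, separately, $[\overline{13}2]\equiv[\overline{31}2]$; no combination of $r$, $c$, $rc$ carries one class to the other, so $[\overline{13}2]$ cannot be dispatched ``via $c$ or $rc$.'' It needs its own (analogous but distinct) forcing argument, which is what the paper supplies for $[\overline{31}2]=[2\overline{31}]$: every adjacent descent $x\to y$ must satisfy $x=y+1$, since any value strictly between $y$ and $x$ elsewhere on the circle would complete a copy; hence $2$ immediately precedes $1$, $3$ immediately precedes $2$, and so on, again forcing $[\delta_n]$. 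With the induction dropped in favor of the one-line conclusion from your ascent observation, and with this second case argued separately rather than by a nonexistent symmetry, your proof matches the paper's.
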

\begin{proof}
The vincular cyclic patterns of length 3 with one vinculum are 
\[[\overline{12}3]\equiv[\overline{21}3]\equiv[\overline{23}1]\equiv[\overline{32}1]\]
and
\[[\overline{13}2]\equiv[\overline{31}2],\]
where these Wilf equivalences are trivial Wilf equivalences.

For $n < 3$, we have only one cyclic permutation, so the result holds.
So we now assume $n \geq 3$.

We first address $[\overline{12}3]$.
Consider $[\sigma]\in[S_n]$ that avoids $[\overline{12}3]=[3\overline{12}]$.
As 1 must be followed by an ascent, this ascent must be to $n$, as otherwise 1, the element just after it, and $n$ form a $[\overline{12}3]$ pattern.
Consider 2 and the element just after it.
This element cannot be $n$ as 1 is just before $n$, but if it is an ascent then we have a $[\overline{12}3]$ pattern using $n$ as our 3, so 2 must be just before 1.
Continuing this logic, we find that $[\sigma]=[(n-1),\dots,2,1,n]=[\delta_n]$ is the only $[\overline{12}3]$-avoiding cyclic permutation, and hence $\left|\Av_n[\overline{12}3]\right|=1$.

Now we address $[\overline{31}2]$.
Consider $[\sigma]\in[S_n]$ that avoids $[\overline{31}2]=[2\overline{31}]$.
Note that 1 and the element just before it must form a descent, and so to avoid $[\overline{31}2]$ the element just before 1 must be a 2, as otherwise these two elements along with 2 form a $[\overline{31}2]$ pattern.
Similarly, 2 and the element just before it must also form a descent, and we find 3 must be just before 2.
Inductively, we find that $[\sigma]=[\delta_n]$ is the only possibility, so $\left|\Av_n[\overline{31}2]\right|=1$.
\end{proof}

Next, we address vincular cyclic patterns of length 3 with two vincula.
There are six such vincular cyclic patterns, namely $[\overline{123}]\equiv[\overline{321}]$ and $[\overline{132}]\equiv[\overline{213}]\equiv[\overline{231}]\equiv[\overline{312}]$, where these Wilf equivalences are trivial.
Viewing each cyclic $[\sigma]$ permutation as starting at $\sigma_1=1$, in order to avoid $[\overline{123}]$, or two consecutive cyclic ascents, as we cannot ascend to 1, this is equivalent to avoiding a double ascent in the (linear) permutation $\sigma_2\cdots\sigma_n$ as well as avoiding an initial ascent, i.e., $\sigma_2 < \sigma_3$.
Bergeron, Flajolet, and Salvy \cite{bergeron1992varieties} showed this sequence gives the exponential generating function
\begin{equation}\label{eq: 123 egf}
    \sum_{n\geq 0} \left|\Av_{n+1}[\overline{123}]\right| \frac{z^n}{n!} = \frac{1}{2} + \frac{\sqrt{3}}{2}\tan\left(\frac{\sqrt{3}}{2}z+\frac{\pi}{6}\right),
\end{equation}
which satisfies the differential equation
\begin{equation}\label{eq: 123 diff eq}
    E' = E^2 - E + 1.
\end{equation}
This resolves the first half of \cite[Conjecture 6.4]{domagalski2021cyclic}, although we had to change the indices of the exponential generating function to use $\left|\Av_{n+1}[\overline{123}]\right|$ in order for the conjectured differential equation to hold.
Elizalde and Sagan \cite[Corollary 2]{elizalde2021consecutive}\footnote{This paper \cite{elizalde2021consecutive} will be merged with \cite{domagalski2021cyclic} in a later version.} concurrently and independently proved a more general result that implies \cref{eq: 123 diff eq}, which thus has the explicit solution given by \cref{eq: 123 egf}; their proof method relates $[\overline{123}]$-avoiding cyclic permutations to $\overline{123}$-avoiding linear permutations, contrasting with our method of relating it to linear permutations without double ascents or an initial ascent.
We note that a result by Ehrenborg \cite[Theorem 3.3]{ehrenborg2016cyclically} implies the following closed form for $\left|\Av_{n}[\overline{123}]\right|$:
\begin{equation*}
    \left|\Av_n[\overline{123}]\right| = (n-1)! \sum_{k=-\infty}^\infty \left(\frac{\sqrt{3}}{2\pi(k+1/3)}\right)^n.
\end{equation*}
In order for $[\sigma]$ to avoid $[\overline{132}]$, assuming $\sigma_1 = 1$, $\sigma_1$ cannot be either the 3 or 2 in the vincular pattern $[\overline{132}]$, so this is equivalent to the linear permutation $\sigma$, which must start with 1 and avoid $\overline{132}$.
It is easy to see from the commented description that this is counted by the OEIS sequence A052319, which gives exponential generating function
\begin{equation}\label{eq: 132 egf}
    \sum_{n \geq 1} \left|\Av_n[\overline{132}]\right| \frac{z^n}{n!} = -\ln\left(1-\sqrt{\frac{\pi}{2}}\erf\left(\frac{z}{\sqrt{2}}\right)\right),
\end{equation}
where $\erf(z)$ is the error function
\begin{equation*}
    \erf(z)=\frac{2}{\sqrt{\pi}} \int_0^z e^{-t^2} dt.
\end{equation*}
This exponential generating function satisfies the differential equation
\begin{equation}\label{eq: 132 diff eq}
    E' = e^{\displaystyle E-z^2/2},
\end{equation}
resolving the second half of \cite[Conjecture 6.4]{domagalski2021cyclic}.
Elizalde and Sagan \cite[Corollary 4]{elizalde2021consecutive} also concurrently and independently resolved this second half of the conjecture by proving a more general result that implies \cref{eq: 132 diff eq}, which in turn implies \cref{eq: 132 egf}.

\section{Multiple vincular cyclic patterns of length 3}\label{section: multiple vincular 3}
Similar to the study of \cite{domagalski2021cyclic} for non-vincular cyclic patterns, we analyze the sizes of the avoidance classes for sets of multiple vincular cyclic patterns.

Consider a set $\Pi$ of (potentially vincular) cyclic patterns of length 3.
By \cref{theorem: vincular 3 one vinculum}, if $\Pi$ contains a vincular pattern with one vinculum, then $\left|\Av_n[\Pi]\right| \leq 1$, where if $\Av_n[\Pi]$ is nonempty then either $\Av_n[\Pi]=\{[\iota_n]\}$ or $\Av_n[\Pi]=\{[\delta_n]\}$.
Moreover, the non-vincular cyclic patterns of length 3 are $[321]$ and $[123]$, which are only avoided by $[\iota_n]$ and $[\delta_n]$, respectively.
We see that $[\iota_n]$ avoids the set of vincular patterns $\{[321],[\overline{13}2],[\overline{21}3], [\overline{32}1],[\overline{132}],[\overline{213}],[\overline{321}]\}$, i.e., the set of possibly vincular cyclic patterns of length 3 that when ``de-vincularized" are equal to $[321]$, and $[\delta_n]$ avoids $\{[123],[\overline{12}3],[\overline{23}1],[\overline{31}2],[\overline{123}],[\overline{231}],[\overline{312}]\}$, the set of possibly vincular cyclic patterns of length 3 that when de-vincularized are equal to $[123]$; in addition, each contains all the patterns in the other set.
Hence, assuming $\Pi$ contains a vincular pattern with at most one vinculum, if $\Pi$ is a subset of either of these two sets, then $\left|\Av_n[\Pi]\right| = 1$, namely containing the respective $[\iota_n]$ or $[\delta_n]$.
Otherwise, $\left|\Av_n[\Pi]\right|=0$ for $n \geq 3$.

It remains to consider $\Pi$ containing only vincular patterns of length 3 with two vincula.
This case is far more interesting, as there are many more cyclic permutations avoiding such patterns.

\subsection{Doubleton sets}\label{subsection: multiple vincular 3 doubleton}
There are $\binom{6}{2}=15$ doubleton sets, i.e., sets containing two elements, that contain vincular patterns of length 3 with two vincula.
We first address the doubleton sets that do not admit any cyclic permutations.
\begin{proposition}\label{proposition: multiple vincular 3 doubleton 0}
For $\Pi$ equaling any of the following six doubleton sets
\[ \{[\overline{123}],[\overline{132}]\},\,\,\,
\{[\overline{123}],[\overline{213}]\},\,\,\,
\{[\overline{321}],[\overline{231}]\},\,\,\,
\{[\overline{321}],[\overline{312}]\},\,\,\,
\{[\overline{132}],[\overline{231}]\},\,\,\,
\{[\overline{213}],[\overline{312}]\}, \]
we have $\left|\Av_n[\Pi]\right|=\begin{cases} 1 & n \leq 2 \\ 0 & n \geq 3.\end{cases}$
\end{proposition}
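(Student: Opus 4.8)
The plan is first to collapse the six sets into two using the trivial Wilf equivalences for cyclic patterns, and then to show directly that each representative is unavoidable for $n\ge 3$ by inspecting the entries around the largest value of a cyclic permutation. The cases $n\le 2$ require nothing: each such length has a unique cyclic permutation, and it has no window of three cyclically consecutive entries, so it vacuously avoids every length-$3$ pattern and $\left|\Av_n[\Pi]\right|=1$.

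For the reduction I would check how reverse, complement, and reverse complement act on the six doubletons. Applying $rc$, $c$, and $r$ to $\{[\overline{123}],[\overline{213}]\}$ produces $\{[\overline{123}],[\overline{132}]\}$, $\{[\overline{321}],[\overline{231}]\}$, and $\{[\overline{321}],[\overline{312}]\}$ respectively, so these four sets are trivially Wilf equivalent; and applying $rc$ to $\{[\overline{132}],[\overline{231}]\}$ gives $\{[\overline{213}],[\overline{312}]\}$, so those two are trivially Wilf equivalent. Hence it is enough to prove $\Av_n[\Pi]=\emptyset$ for $n\ge 3$ in the two cases $\Pi=\{[\overline{132}],[\overline{231}]\}$ and $\Pi=\{[\overline{123}],[\overline{213}]\}$.

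For $\Pi=\{[\overline{132}],[\overline{231}]\}$: take $[\sigma]\in[S_n]$ with $n\ge 3$ and write $\sigma_j=n$. In the cyclic window $\sigma_{j-1},\sigma_j,\sigma_{j+1}$ the largest entry, $n$, occupies the middle slot, so the window is order isomorphic to $132$ when $\sigma_{j-1}<\sigma_{j+1}$ and to $231$ when $\sigma_{j-1}>\sigma_{j+1}$; in either case $[\sigma]$ contains a pattern of $\Pi$. For $\Pi=\{[\overline{123}],[\overline{213}]\}$: write $\sigma_j=n$ again. Since $\sigma_{j-1}<\sigma_j$, avoiding $[\overline{123}]$ forces $\sigma_{j-2}>\sigma_{j-1}$, for otherwise $\sigma_{j-2}<\sigma_{j-1}<\sigma_j$ would be a copy of $\overline{123}$. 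But then in the cyclic window $\sigma_{j-2},\sigma_{j-1},\sigma_j$ the entry $\sigma_{j-1}$ is smallest while $\sigma_j=n$ is largest, so the window is order isomorphic to $213$, giving a copy of $[\overline{213}]$.

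The argument is short and I do not foresee a genuine obstacle; the only points demanding care are the bookkeeping in the Wilf-equivalence reduction — confirming that all six listed sets really do split into the two claimed classes — and the routine observation that for $n\ge 3$ the indices $j-1,j,j+1$ (respectively $j-2,j-1,j$) name three distinct cyclic positions, so that each is a legitimate window of the cyclic permutation.
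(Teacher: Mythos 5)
Your proof is correct and uses essentially the same idea as the paper: anchor a cyclic window of three consecutive entries at the extremal value sitting in the position the two patterns share, and observe that the remaining two entries must realize one of the two patterns. The only organizational difference is that you first collapse the six sets to two representatives via trivial Wilf equivalence, whereas the paper treats all six uniformly with the ``shared $1$ or shared $3$'' observation; both routes are equally valid.
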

\begin{proof}
For $n \leq 2$, there is one cyclic permutation, which yields the desired result.

All six of these doubleton sets consist of two patterns that either share a 1 in the same position or a 3 in the same position.
Consider the corresponding consecutive subsequence of three elements that uses 1 or $n$ in the corresponding position, if the shared value is 1 or 3, respectively.
Then the other two values $x$ and $y$ must either satisfy $x<y$ or $x>y$, which will not avoid one of the patterns.

For example, consider the first set $\{[\overline{123}],[\overline{132}]\}$.
Consider an arbitrary cyclic permutation of length $n\geq 3$, and consider the consecutive subsequence of three elements that starts with $1$, i.e., of the form $1xy$.
If $x<y$, then this is order isomorphic to $[\overline{123}]$, and otherwise it is order isomorphic to $[\overline{132}]$, so no permutation can avoid both patterns.
\end{proof}

We now address the case of $\Pi=\{[\overline{123}],[\overline{321}]\}$, which we show is equinumerous with the up-down permutations.
Recall that an up-down permutation on $n$ elements is a permutation $\sigma=\sigma_1\cdots \sigma_n$ where $\sigma_1 < \sigma_2 > \sigma_3 < \sigma_4 > \cdots$, i.e., the permutation alternates between ascents and descents.
Let $U_n$ be the number of up-down permutations on $n$ elements.
Up-down permutations are also referred to as alternating permutations, however use of this terminology is inconsistent as other authors define alternating permutations to also include the down-up permutations, defined analogously.
\begin{proposition}\label{proposition: multiple vincular 3 doubleton alternating}
For all $n \geq 1$,
\begin{equation*}
    \left|\Av_n[\overline{123},\overline{321}]\right| =
    \begin{cases}
    1 & n = 1 \\
    0 & n \geq 3 \text{ is odd} \\
    U_{n-1} & n \geq 2 \text{ is even}.
    \end{cases}
\end{equation*}
\end{proposition}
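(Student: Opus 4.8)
The plan is to set up a bijection between $\Av_n[\overline{123},\overline{321}]$ for even $n\geq 2$ and the up-down permutations on $n-1$ elements, and separately to rule out odd $n\geq 3$. For the odd case, observe that avoiding both $[\overline{123}]$ (two consecutive cyclic ascents) and $[\overline{321}]$ (two consecutive cyclic descents) means that, reading around the circle, ascents and descents must strictly alternate. But a cyclic sequence of length $n$ has $n$ consecutive pairs arranged in a cycle, and a cyclic binary string with no two equal adjacent entries forces $n$ even; hence no such cyclic permutation exists when $n$ is odd and at least $3$. (For $n=1$ there is a unique cyclic permutation and the relevant pattern conditions are vacuous, giving the value $1$.)

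For even $n$, I would normalize each cyclic permutation by writing it as a linear word starting with its largest entry $n$, say $[\sigma]$ is represented by $n\,\sigma_2\sigma_3\cdots\sigma_n$. The alternation-of-ascents-and-descents condition around the circle then becomes a condition on this word: since the step from $n$ down to $\sigma_2$ is a descent and the step from $\sigma_n$ up to $n$ is an ascent, the interior steps among $\sigma_2,\dots,\sigma_n$ must alternate starting with a descent after the first one — concretely $\sigma_2>\sigma_3<\sigma_4>\sigma_5<\cdots$, i.e., $\sigma_2\sigma_3\cdots\sigma_n$ is a down-up permutation on the $n-1$ values $\{1,\dots,n-1\}$ (using that these are exactly $[n]\setminus\{n\}$ since $n$ was pulled to the front). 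Taking the reduction of $\sigma_2\cdots\sigma_n$ gives a permutation in $S_{n-1}$, and down-up permutations on $n-1$ letters are equinumerous with up-down permutations on $n-1$ letters via the reverse (or complement) map, so there are $U_{n-1}$ of them.

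It remains to check this correspondence is a bijection: every such cyclic permutation gives a unique normalized word (the representative starting with $n$ is unique within the rotation class), hence a unique down-up permutation in $S_{n-1}$; conversely, given any down-up permutation $\tau\in S_{n-1}$, prepend $n$ to get a linear word on $[n]$, and its cyclic class avoids both patterns precisely because the interior alternation plus the forced descent $n\to\tau_1$ and forced ascent $\tau_{n-1}\to n$ make the cyclic ascent/descent pattern alternate perfectly, so no two consecutive cyclic ascents and no two consecutive cyclic descents occur. One should double-check the boundary behavior of the alternation — specifically that with $n$ even the parity works out so that the last interior step and the wrap-around step are consistent (the descent $n\to\tau_1$, then $\tau$ down-up, then the ascent $\tau_{n-1}\to n$): this parity bookkeeping is the one place where the even/odd dichotomy is actually used, and it is the main thing to get right, though it is a short verification rather than a genuine obstacle. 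Combining the three cases yields the stated formula.
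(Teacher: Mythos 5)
Your overall strategy is exactly the paper's: rule out odd $n\geq 3$ by the parity of alternating ascent/descent steps around the cycle, and for even $n$ normalize the cyclic permutation to start at $n$ and read off an alternating permutation of length $n-1$ from the remaining entries. The odd case and the normalization are handled correctly.

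However, you have the direction of the alternation backwards, and this is precisely the ``parity bookkeeping'' you flagged as the thing to get right. With $\sigma_1=n$, the wrap-around step $\sigma_n\to n$ is an ascent and the step $n\to\sigma_2$ is a descent; for the $n$ cyclic steps to alternate, the \emph{next} step $\sigma_2\to\sigma_3$ must therefore be an ascent, giving $\sigma_2<\sigma_3>\sigma_4<\cdots$. So $\sigma_2\cdots\sigma_n$ is an \emph{up-down} permutation, not a down-up one, and no reverse/complement correction is needed --- the count $U_{n-1}$ falls out directly, as in the paper. As written, your inverse map genuinely fails: prepending $n$ to a down-up permutation $\tau$ (with $\tau_1>\tau_2$) produces two consecutive cyclic descents $n>\tau_1>\tau_2$, i.e.\ a copy of $[\overline{321}]$, so the image is not in the avoidance class. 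The error is self-correcting at the level of cardinalities only because up-down and down-up permutations are equinumerous; fix the alternation direction and the bijection (and the proof) is correct and matches the paper's.
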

\begin{proof}
For $n \leq 2$, we have one cyclic permutation, so the result holds; in particular, $U_1 = 1$.
For $n \geq 3$, notice that in order to avoid two consecutive cyclic ascents and two consecutive cyclic descents, the cyclic permutation must be alternating, i.e., alternate between cyclic ascents and descents.
This is clearly impossible for odd $n$ due to the cyclic nature of the permutation, so no permutations avoid both vincular cyclic patterns.

It remains to resolve the even case for $n \geq 4$.
Consider a cyclic permutation $[\sigma]$ that avoids $\{[\overline{123}],[\overline{321}]\}$ of length $n$.
Without loss of generality we may assume $\sigma_1 = n$.
In particular, we must descend from $n$, and ascend to $n$, so the linear permutation $\sigma_2\cdots\sigma_n$ is an up-down permutation of length $n-1$.
This is a necessary and sufficient condition for $[\sigma]$ to avoid $\{[\overline{123}],[\overline{321}]\}$.
Hence, there are $U_{n-1}$ such cyclic permutations.
\end{proof}
\begin{remark}\label{remark: multiple vincular 3 doubleton alternating tangent}
Up-down permutations were first studied by Andr\'e \cite{andre1881permutations}, who showed that the exponential generating function of the number of up-down permutations $U_n$ is $\tan(x)+\sec(x)$, where $\tan(x)$ provides the terms with odd degree and $\sec(x)$ provides the terms with even degree.
Thus, including the $n=1$ term, we find
\begin{equation*}
    \sum_{n\geq 0} \left|\Av_{n+1}[\overline{123},\overline{321}]\right| \frac{z^n}{n!}= 1 + \tan(z),
\end{equation*}
or equivalently
\begin{equation*}
    \sum_{n \geq 1} \left|\Av_n[\overline{123},\overline{321}]\right| \frac{z^n}{n!} = \int_0^z (1 + \tan(x)) dx = z - \ln(\cos(z)).
\end{equation*}
The asymptotics of the proportion of permutations that are up-down is
\begin{equation*}
    \frac{U_n}{n!} =  2\left(\frac{2}{\pi}\right)^{n+1} + O\left(\left(\frac{2}{3\pi}\right)^n\right);
\end{equation*}
see Stanley \cite{stanley2010survey}.
\end{remark}

The remaining doubleton sets form three Wilf equivalence classes under the trivial Wilf equivalences:
\begin{align*}
    \{[\overline{123}],[\overline{231}]\} \equiv \{[\overline{123}],[\overline{312}]\} & \equiv \{[\overline{321}],[\overline{132}]\} \equiv \{[\overline{321}],[\overline{213}]\} \tag{A} \\
    \{[\overline{132}],[\overline{213}]\} &\equiv \{[\overline{231}],[\overline{312}]\} \tag{B} \\
    \{[\overline{132}],[\overline{312}]\} &\equiv \{[\overline{213}],[\overline{231}]\} \tag{C}.
\end{align*}
A computer search demonstrates none of these three classes are Wilf equivalent, and provides the following table of data, \cref{tab: multiple vincular 3 doubleton}, on the number of cyclic permutations avoiding a member of one of these three Wilf equivalence classes.
Currently, none of these three sequences appear in the OEIS \cite{sloane2018line}.
We leave the enumeration of these three Wilf equivalence classes as an open problem.
\begin{table}[htbp!]
    \centering
    \begin{tabular}{|l|l|l|l|}
        \hline
        $n$ & $\left|\Av_n[(\mathrm{A})]\right|$ & $\left|\Av_n[(\mathrm{B})]\right|$ & $\left|\Av_n[(\mathrm{C})]\right|$ \\
        \hline
        1 & 1 & 1 & 1 \\
        2 & 1 & 1 & 1 \\
        3 & 1 & 1 & 0 \\
        4 & 1 & 1 & 1 \\
        5 & 4 & 3 & 2 \\
        6 & 14 & 12 & 6 \\
        7 & 54 & 46 & 20 \\
        8 & 278 & 218 & 86 \\
        9 & 1524 & 1206 & 416 \\
        10 & 9460 & 7272 & 2268 \\
        11 & 66376 & 49096 & 13598 \\
        12 & 504968 & 366547  & 89924  \\
        13 & 4211088 & 2970945  & 649096  \\
        \hline
    \end{tabular}
    \vspace{2mm}
    \caption{Size of avoidance classes of doubleton sets of vincular cyclic patterns of length 3.}
    \label{tab: multiple vincular 3 doubleton}
\end{table}

\subsection{Three or more patterns}\label{subsection: multiple vincular 3 three+}
As observed previously in \cref{section: multiple vincular 3}, if a set $\Pi$ of cyclic patterns of length 3 contains a pattern with at most one vinculum, then we have $\left|\Av_n[\Pi]\right| \leq 1$ and its exact value can be easily determined.
Thus we will only consider $\Pi$ consisting of three or more vincular patterns of length 3, all with two vincula.
There are $\binom{6}{3}=20$ sets of three such patterns.
All but two of these contain a set from \cref{proposition: multiple vincular 3 doubleton 0}, so it follows that for $n \geq 3$, no cyclic permutations in $[S_n]$ avoid all three patterns in each of these 18 sets.
The two remaining sets are
\begin{equation*}
    \{[\overline{123}],[\overline{231}],[\overline{312}]\} \equiv \{[\overline{132}],[\overline{213}],[\overline{321}]\}.
\end{equation*}
Adding any other such pattern will yield one of the sets from \cref{proposition: multiple vincular 3 doubleton 0} contained in $\Pi$, so we see that for any $\Pi$ consisting of more than three patterns, $\left|\Av_n[\Pi]\right|=0$ for $n \geq 3$, and 1 otherwise.

We now provide a bijection between $\Av_n[\overline{132},\overline{213},\overline{321}]$ and the set of total cyclic orders extending a particular partial cyclic order.
We first define the necessary concepts regarding partial cyclic orders, which can be seen as a circular analog of a poset.
We refer readers to Megiddo \cite{megiddo1976partial} for a more comprehensive introduction of cyclic orders.
\begin{definition}\label{definition: partial cyclic order}
A partial cyclic order on a set $X$ is a ternary relation $Z \subset X^3$ satisfying the following conditions:
\begin{enumerate}
    \item $\forall x,y,z \in X, (x,y,z) \in Z \Rightarrow (y,z,x) \in Z$ (cyclicity),
    \item $\forall x,y,z \in X, (x,y,z) \in Z \Rightarrow (z,y,x) \not\in Z$ (antisymmetry),
    \item $\forall x,y,z,u \in X, (x,y,z) \in Z$ and $(x,z,u) \in Z \Rightarrow (x,y,u) \in Z$ (transitivity).
\end{enumerate}
A partial cyclic order $Z$ on a set $X$ is a total cyclic order if for any triple $(x,y,z)$ of distinct elements, either $(x,y,z)\in Z$ or $(z,y,x)\in Z$.
\end{definition}
A partial cyclic order $Z'$ extends another partial cyclic order $Z$ if $Z\subseteq Z'$.
The problem of determining whether a given partial cyclic order admits a cyclic extension is NP-complete, as shown by Galil \& Megiddo \cite{galil1977cyclic}.

One way in which cyclic orders can be seen as a circular analog of partial orders is that a totally ordered set can be organized into a chain, where an element $y$ is larger than $x$ if $y$ is above $x$; on the other hand, a total cyclic order can be visually represented by placing the elements of $X$ on a circle, so that $(x,y,z)\in Z$ if and only if, starting from $x$ and going in the positive (i.e., counterclockwise) direction, one encounters $y$ before encountering $z$.

We now use a simplification of the notation of Ramassamy \cite{ramassamy2018extensions}, where we use $\mathcal{R}_n$ in place of $\mathcal{R}_{+^n}^{+,+}$ as used in his paper.
\begin{definition}
For any positive integer $n$, let $\mathcal{R}_n$ denote the set of total cyclic orders $Z$ on the set $X=[n+2]$ such that $(i,i+1,i+2)\in Z$ for all $1 \leq i \leq n$, as well as $(n+1,n+2,1)\in Z$ and $(n+2,1,2)\in Z$.
\end{definition}
Ramassamy \cite{ramassamy2018extensions} proved a recurrence relation that enumerates $\left|\mathcal{R}_n\right|$.
This recurrence relation is quite complicated, however, so for sake of brevity and clarity we do not include this recurrence, and refer readers to \cite[Theorem 3.5]{ramassamy2018extensions}.
No closed form or nice expression for a generating function is currently known for this sequence of numbers $\left|\mathcal{R}_n\right|$, which is OEIS sequence A295264 \cite{sloane2018line}.
However, the recurrence does allow for the construction of an algorithm that calculates the first $n$ values of this sequence in polynomial time, compared to the super-exponential complexity associated with a complete search over all permutations.

We now provide a bijection to demonstrate $\left|\Av_{n+2}[\overline{132},\overline{213},\overline{321}]\right|=\left|\mathcal{R}_n\right|$.
\begin{theorem}\label{theorem: multiple vincular 3 tripleton cyclic order}
For all $n \geq 1$, we have
\begin{equation*}
    \left|\Av_{n+2}[\overline{132},\overline{213},\overline{321}]\right|=\left|\mathcal{R}_n\right|.
\end{equation*}
\end{theorem}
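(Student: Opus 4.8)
The plan is to construct an explicit bijection $\Phi\colon\Av_{n+2}[\overline{132},\overline{213},\overline{321}]\to\mathcal{R}_n$. The first step is a combinatorial reformulation of the avoidance condition. Since the three patterns are totally vincular and $S_3\setminus\{132,213,321\}=\{123,231,312\}$ is precisely the set of three cyclic rotations of $123$, a cyclic permutation $[\sigma]$ of length $n+2$ lies in $\Av_{n+2}[\overline{132},\overline{213},\overline{321}]$ if and only if every window of three cyclically consecutive entries $\sigma_i\sigma_{i+1}\sigma_{i+2}$ reduces to $123$, $231$, or $312$. Equivalently, for every $i$ taken modulo $n+2$ the triple $(\sigma_i,\sigma_{i+1},\sigma_{i+2})$ is a cyclic rotation of its increasing rearrangement; I will call such a triple \emph{cyclically increasing}.

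Next I would define $\Phi$. Given $[\sigma]$ in the avoidance class, take its unique representative with $\sigma_1=1$, view $[n+2]$ as the set of positions, and declare $(a,b,c)\in Z$ exactly when $(\sigma_a,\sigma_b,\sigma_c)$ is cyclically increasing. Then $Z$ is the pullback under the bijection $i\mapsto\sigma_i$ of the standard total cyclic order on $[n+2]$, so it inherits cyclicity, antisymmetry, transitivity, and totality, and hence is a total cyclic order. To check $Z\in\mathcal{R}_n$, note that its defining requirements $(i,i+1,i+2)\in Z$ for $1\le i\le n$, together with $(n+1,n+2,1)\in Z$ and $(n+2,1,2)\in Z$, are exactly the assertions that $(\sigma_i,\sigma_{i+1},\sigma_{i+2})$ is cyclically increasing for all $i$ modulo $n+2$, which is precisely the reformulated avoidance condition. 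For the inverse map, given $Z\in\mathcal{R}_n$ I would read off its underlying cyclic arrangement, written starting from $1$ as $1=c_1,c_2,\dots,c_{n+2}$, set $\sigma$ to be the permutation with $\sigma_{c_j}=j$ (so that $\sigma_1=1$ and $\sigma^{-1}=c_1c_2\cdots c_{n+2}$), and return $[\sigma]$. The membership conditions on $Z$ translate, through the same equivalence read in reverse, into the avoidance condition for $[\sigma]$, and a short verification shows that $\Phi$ and this map are mutually inverse.

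The step I expect to be the main obstacle is setting up the encoding so that the two conditions align cleanly: one must take \emph{positions} rather than values as the ground set of $Z$ and fix the representative with $\sigma_1=1$, so that $\Phi$ is well defined and is a genuine bijection of sets rather than merely a correspondence of rotation orbits, and one must check that the two ``wraparound'' requirements $(n+1,n+2,1)\in Z$ and $(n+2,1,2)\in Z$ correspond exactly to the two windows of $[\sigma]$ that straddle position $1$. Once the encoding is chosen correctly, the verifications that $Z$ is a total cyclic order, that it lies in $\mathcal{R}_n$, and that the construction is invertible are all routine.
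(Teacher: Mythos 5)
Your proposal is correct and is essentially the paper's proof: you construct the identical bijection, sending $[\sigma]$ (normalized so that $\sigma_1=1$) to the ternary relation on positions consisting of the triples whose values reduce to $123$, $231$, or $312$, with the same inverse map recovering $\sigma$ from the cyclic arrangement of $Z$ read off starting at $1$. Your observation that this relation is the pullback of the standard total cyclic order on $[n+2]$ along $i\mapsto\sigma_i$ is a clean way to subsume the paper's explicit casework verifying transitivity and the consistency of the inverse construction, but the underlying argument is the same.
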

\begin{proof}
Consider an element $[\sigma]\in\Av_{n+2}[\overline{132},\overline{213},\overline{321}]$, where we may assume that $\sigma_1 = 1$.
We construct a bijection $\phi: \Av_{n+2}[\overline{132},\overline{213},\overline{321}] \to \mathcal{R}_n$.
The cyclic order $\phi([\sigma])\in \mathcal{R}_n$ has $(i,j,k)\in \phi([\sigma])$ if and only if $\sigma_i\sigma_j\sigma_k$ is order isomorphic to 123, 231, or 312.

We first verify $\phi([\sigma])\in \mathcal{R}_n$.
Clearly this satisfies cyclicity, as well as antisymmetry.
Proving transitivity is more involved.
Suppose $(x,y,z),(x,z,u)\in \phi([\sigma])$.
We split into three cases depending on whether $\sigma_x\sigma_y\sigma_z$ is order isomorphic to 123, 231, or 312.
\begin{enumerate}
    \item Suppose $\sigma_x \sigma_y \sigma_z\cong 123$.
    
    Then as $\sigma_x < \sigma_z$ and $(x,z,u)\in \phi([\sigma])$, either $\sigma_x\sigma_z\sigma_u\cong 123$ or $\sigma_x\sigma_z\sigma_u\cong 231$.
    In the former case, notice that this implies $\sigma_x \sigma_y \sigma_z \sigma_u\cong 1234$, and thus $\sigma_x\sigma_y\sigma_u \cong 123$, so $(x,y,u)\in \phi([\sigma])$.
    In the latter case, this implies $\sigma_x \sigma_y \sigma_z \sigma_u \cong 2341$, and thus $\sigma_x\sigma_y\sigma_u \cong 231$, so $(x,y,u)\in \phi([\sigma])$.
    
    \item Suppose $\sigma_x \sigma_y \sigma_z \cong 231$.
    
    Then in order to have $(x,z,u)\in \phi([\sigma])$, we must have $\sigma_x\sigma_z\sigma_u \cong 312$, so $\sigma_x\sigma_y\sigma_z\sigma_u \cong 3412$, and thus $\sigma_x\sigma_y\sigma_u \cong 231$, so $(x,y,u) \in \phi([\sigma])$.
    
    \item Suppose $\sigma_x\sigma_y\sigma_z\cong 312$.
    
    Then in order to have $(x,z,u)\in\phi([\sigma])$, we must have $\sigma_x\sigma_z\sigma_u\cong312$, so $\sigma_x\sigma_y\sigma_z\sigma_u\cong4123$, and thus $\sigma_x\sigma_y\sigma_u\cong312$, so $(x,y,u)\in\phi([\sigma])$.
\end{enumerate}
It is easy to see $\phi([\sigma])$ is a total cyclic order.
Lastly, we must show that $(i,i+1,i+2)\in \phi([\sigma])$ for all $1 \leq i \leq n$, as well as $(n+1,n+2,1),(n+2,1,2) \in \phi([\sigma])$.
As $[\sigma] \in \Av_{n+2}[\overline{132},\overline{213},\overline{321}]$, we have that any such triple of indices must avoid the vincular patterns $[\overline{132}],[\overline{213}]$, and $[\overline{321}]$, so in particular these three cyclically consecutive values must be order isomorphic to 123, 231, or 312, as desired.
Hence, $\phi([\sigma])\in\mathcal{R}_n$.

We now construct the inverse map $\psi: \mathcal{R}_n \to  \Av_{n+2}[\overline{132},\overline{213},\overline{321}]$, which we will then show satisfies $\phi\circ\psi=\operatorname{id}$.
Given a total cyclic order $Z \in \mathcal{R}_n$, construct $[\sigma]\in [S_{n+2}]$ with $\sigma_1=1$ by enforcing $\sigma_i < \sigma_j$ for distinct $i,j > 1$ if and only if $(1,i,j)\in Z$.
Notice that for any such $\sigma_i$ and $\sigma_j$, either $\sigma_i < \sigma_j$ or $\sigma_j < \sigma_i$ as $Z$ is a total cyclic order.
We claim that a unique assignment of the elements of $[2,n+2]$ to $\sigma_2,\dots,\sigma_{n+2}$ exists that satisfies all of these prescribed relations.
It suffices to show that a directed cycle of self-contradictory relations $\sigma_{i_1} < \sigma_{i_2} < \cdots < \sigma_{i_k} < \sigma_{i_1}$ cannot occur, as this implies the relations form a total order, which can be mapped order isomorphically to $[2,n+2]$.
Suppose for the sake of contradiction that such a directed cycle existed.
Then by construction of $\psi$, we have $(1,i_1,i_2),(1,i_2,i_3),\dots,(1,i_{k-1},i_k)\in Z$, and applying transitivity yields $(1,i_1,i_k)\in Z$, so by antisymmetry $(1,i_k,i_1)\not\in Z$.
But our cycle's final relation $\sigma_{i_k} < \sigma_{i_1}$ implies $(1,i_k,i_1)\in Z$, reaching a contradiction.
Hence a unique assignment of the elements of $[2,n+2]$ to $\sigma_2,\dots,\sigma_{n+2}$ exists, which yields a unique $[\sigma]\in [S_{n+2}]$.
We define $\psi(Z)=[\sigma]$.

We first show that $\psi(Z) \in \Av_{n+2}[\overline{132},\overline{213},\overline{321}]$.
Let $[\sigma]=\psi(Z)$.
Consider three cyclically consecutive elements $\sigma_i\sigma_{i+1}\sigma_{i+2}$ of $\psi(Z)$, where the indices are taken modulo $n+2$.
By definition of $\mathcal{R}_n$, we know $(i,i+1,i+2)\in Z$.
As $Z$ is a total cyclic order, we split into two cases depending on whether $(1,i,i+2)\in Z$ or not.

Case 1: $(1,i,i+2)\in Z$.
Then $(i,i+1,i+2),(i,i+2,1)\in Z$, which implies $(i,i+1,1)\in Z$, so $\sigma_i < \sigma_{i+1}$.
Moreover, $(i+2,1,i),(i+2,i,i+1)\in Z$, which implies $(i+2,1,i+1)\in Z$, so $\sigma_{i+1} < \sigma_{i+2}$, so $\sigma_i\sigma_{i+1}\sigma_{i+2}\cong 123$, and thus it avoids the three forbidden patterns.

Case 2: $(1,i+2,i) \in Z$.
Then $\sigma_{i+2}<\sigma_i$.
If $(1,i,i+1)\in Z$, then $\sigma_i < \sigma_{i+1}$, and thus $\sigma_i\sigma_{i+1}\sigma_{i+2}\cong 231$, which avoids the three forbidden patterns.
Otherwise, we have $(1,i+1,i)\in Z$.
Then we have $(i+1,i+2,i),(i+1,i,1)\in Z$, so transitivity implies $(i+1,i+2,1)\in Z$, and thus $\sigma_{i+1}<\sigma_{i+2}$.
Thus $\sigma_{i+1}<\sigma_{i+2}<\sigma_i$, so $\sigma_i\sigma_{i+1}\sigma_{i+2}\cong 312$, which avoids the three forbidden patterns.

Hence, we find $\psi(Z) \in \Av_{n+2}[\overline{132},\overline{213},\overline{321}]$.

Finally, it suffices to show that $\phi\circ\psi(Z)=Z$ for all $Z \in \mathcal{R}_n$.
Let $[\sigma]=\psi(Z)$.
Suppose $\sigma_i\sigma_j\sigma_k$ is order isomorphic to 123, 231, or 312.
It suffices to show that $(i,j,k)\in Z$.
By applying cyclicity afterwards, it suffices to address the case $\sigma_i\sigma_j\sigma_k\cong 123$.
As $\sigma_i < \sigma_j < \sigma_k$, by definition of $\psi$ we know $(1,i,j),(1,j,k)\in Z$.
So $(j,k,1)$ and $(j,1,i)$ are both in $Z$, and transitivity yields $(j,k,i)\in Z$, or equivalently $(i,j,k)\in Z$, as desired.
Hence $\phi\circ\psi(Z)=Z$, and $\phi$ is a bijection between $\Av_{n+2}[\overline{132},\overline{213},\overline{321}]$ and $\mathcal{R}_n$, which proves the result.
\end{proof}

This completes the classification for all sets $\Pi$ containing at least three vincular patterns of length 3.

\section{Vincular cyclic patterns of length 4}\label{section: vincular 4}
There are four ways to place vincula into a vincular pattern of length 4: one vinculum $[\overline{ab}cd]$, two disjoint vincula $[\overline{ab}\,\overline{cd}]$, two consecutive vincula $[\overline{abc}d]$, and three vincula $[\overline{abcd}]$.
We investigate the case of a single vinculum.

There are $4!=24$ vincular cyclic patterns of length 4 with one vinculum, grouped into the following equivalence classes via trivial Wilf equivalences.
\begin{align*}
    [\overline{12}34] \equiv [\overline{21}43] &\equiv [\overline{34}12] \equiv [\overline{43}21] \tag{A} \\
    [\overline{12}43] \equiv [\overline{21}34] &\equiv [\overline{34}21] \equiv [\overline{43}12] \tag{B} \\
    [\overline{13}24] \equiv [\overline{24}13] &\equiv [\overline{31}42] \equiv [\overline{42}31] \tag{C} \\
    [\overline{13}42] \equiv [\overline{24}31] &\equiv [\overline{31}24] \equiv [\overline{42}13] \tag{D} \\
    [\overline{14}23] &\equiv [\overline{41}32] \tag{E} \\
    [\overline{14}32] &\equiv [\overline{41}23] \tag{F} \\
    [\overline{23}14] &\equiv [\overline{32}41] \tag{G} \\
    [\overline{23}41] &\equiv [\overline{32}14] \tag{H}
\end{align*}

A computer search provides the following table of data, \cref{tab: vincular 4 one vinculum}, on the number of cyclic permutations in each trivial Wilf equivalence class of avoidance classes.
\begin{table}[htbp!]
    \centering
    \begin{tabular}{|l|l|l|l|l|l|l|l|l|l|l|l|l|}
        \hline
        $n$ & $\left|\Av_n[(\mathrm{A})]\right|$ & $\left|\Av_n[(\mathrm{C})]\right|=\left|\Av_n[(\mathrm{E})]\right|$ & $\left|\Av_n[(\mathrm{D})]\right|$ & $\left|\Av_n[(\mathrm{G})]\right|$ & $\left|\Av_n[(\mathrm{H})]\right|$ \\
        & $=\left|\Av_n[(\mathrm{B})]\right|$ & $=\left|\Av_n[(\mathrm{F})]\right|=C_{n-1}$ & & & \\
        \hline
        1 & 1 & 1 & 1 & 1 & 1 \\
        2 & 1 & 1 & 1 & 1 & 1 \\
        3 & 2 & 2 & 2 & 2 & 2 \\ 
        4 & 5 & 5 & 5 & 5 & 5 \\ 
        5 & 14 & 14 & 13 & 14 & 15 \\ 
        6 & 43 & 42 & 35 & 42 & 50 \\ 
        7 & 144 & 132 & 97 & 133 & 180 \\ 
        8 & 523 & 429 & 275 & 442 & 690 \\ 
        9 & 2048 & 1430 & 794 & 1537 & 2792 \\
        10 & 8597 & 4862 & 2327 & 5583 & 11857 \\
        11 & 38486 & 16796 & 6905 & 21165 & 52633 \\
        12 & 182905 & 58786 & 20705 & 83707 & 243455 \\
        \hline
        OEIS & A047970 & A000108 & A025242 & A34666$0^*$ & A34666$1^*$ \\
        \hline
    \end{tabular}
    \vspace{2mm}
    \caption{Size of avoidance classes of vincular cyclic patterns of length 4 with one vinculum.
    Trivial Wilf equivalence classes that are enumerated by the same values for $n \leq 12$ are condensed into the same column; all of these nontrivial Wilf equivalences are proven in this paper.
    OEIS references are included; the last two, marked with asterisks, are new and come from this work.}
    \label{tab: vincular 4 one vinculum}
\end{table}

We first enumerate (A) and (B), demonstrating that they are Wilf equivalent.
In order to do this, we first define the Zeilberger statistic and set of a permutation, and then of a cyclic permutation.
\begin{definition}
The \emph{Zeilberger set}\footnote{The terminology of a Zeilberger set was suggested by Colin Defant.} of a permutation $\sigma \in S_n$ is the longest subsequence of the form $n,n-1,\dots,i$ for some $i$.
The \emph{Zeilberger statistic} of a permutation $\sigma$, denoted $\zeil(\sigma)$, is the length of the Zeilberger set, i.e., the largest integer $m$ such that $n,n-1,\dots,n-m+1$ is a subsequence of $\sigma$.
\end{definition}
The Zeilberger statistic originated in Zeilberger's study of stack-sortable permutations \cite{zeilberger1992proof}, and has been studied in articles such as \cite{bousquet1998multi,kitaev2008classification,bouvel2014refined}.
We extend this definition to cyclic permutations.
\begin{definition}
The \emph{Zeilberger set} of a cyclic permutation $[\sigma]\in[S_n]$ is the longest subsequence of the form $n,n-1,\dots,i$ for some $i$ appearing within some permutation $\sigma'\in[\sigma]$, i.e., some rotation of $\sigma$.
The \emph{Zeilberger statistic} of a cyclic permutation $[\sigma]$, denoted $\zeil[\sigma]$, is the cardinality of the Zeilberger set of $[\sigma]$, i.e., the largest integer $m$ such that $n,n-1,\dots,n-m+1$ is a subsequence of some rotation of $\sigma$.
Notice that $\zeil[\sigma]$ is the maximum of $\zeil(\sigma')$ for all rotations $\sigma'$ of $\sigma$.
\end{definition}
For example, the Zeilberger set of $[136254]$ is the subsequence 6543 because it is a subsequence of the rotation 625413.
\begin{definition}
The \emph{reverse Zeilberger set} of a cyclic permutation $[\sigma]\in[S_n]$ is the longest subsequence of the form $i,i+1,\dots,n$ for some $i$ appearing within some permutation $\sigma'\in[\sigma]$.
\end{definition}
Notice that the reverse Zeilberger set of $[\sigma]$ corresponds to the Zeilberger set of $[\sigma^r]$, and the \emph{reverse Zeilberger statistic}, the cardinality of the reverse Zeilberger set, is simply $\zeil[\sigma^r]$, so no new notation is needed.

\begin{theorem}\label{theorem: vincular 4 one vinculum A B}
For all $n \geq 2$, we have
\begin{equation*}
    \left|\Av_n[\overline{12}34]\right| = \left|\Av_n[\overline{12}43]\right| = 1 + \sum_{i=0}^{n-2} i (i+1)^{n-i-2}.
\end{equation*}
\end{theorem}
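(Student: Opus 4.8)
The plan is to give a structural description of the $[\overline{12}34]$-avoiding cyclic permutations, enumerate them by Zeilberger statistic, and then deduce the equality with $\left|\Av_n[\overline{12}43]\right|$ from the analogous analysis.

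\emph{Step 1: a structural criterion.} Every cyclic permutation has a unique representative $\sigma$ with $\sigma_1=n$; set $w=\sigma_2\cdots\sigma_n\in S_{n-1}$, and note $\zeil[\sigma]=1+\zeil(w)$, since a subsequence $n,n-1,\dots,n-m+1$ occurring in cyclic order is realized in the rotation beginning at $\sigma_1=n$. A cyclic occurrence of $\overline{12}34$ is an ascent $\sigma_a<\sigma_{a+1}$ together with two entries, read once around the circle without passing $\sigma_a$, that both exceed $\sigma_{a+1}$ and appear in increasing order. Because $\sigma_1=n$ lies on the cyclic arc strictly between any internal ascent of $w$ and its ``wrap,'' and $n$ is maximal, such an occurrence would force $n$ into the role of the ``$4$''; from this one checks that $[\sigma]$ avoids $[\overline{12}34]$ if and only if for every ascent $w_i<w_{i+1}$ of $w$ we have (i) $w_{i+1}$ exceeds every later entry of $w$ (equivalently, $w_{i+1}$ is a right-to-left maximum), and (ii) the entries of $w_1\cdots w_{i-1}$ exceeding $w_{i+1}$ form a decreasing subsequence.

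\emph{Step 2: structure of the admissible $w$.} Let $v_1=n-1>v_2>\cdots>v_r$ be the right-to-left maxima of such a $w$, at positions $p_1<\cdots<p_r$, and write $G_j$ for the (necessarily decreasing) block of entries strictly between positions $p_j$ and $p_{j+1}$, with $G_0$ the entries before $p_1$. Analyzing (i)--(ii) across all ascents simultaneously shows: with $\ell=\zeil(w)$ one has $v_k=n-k$ for $1\le k\le\ell$; the value $n-\ell-1$ (present exactly when $\ell<n-1$) must lie in one of $G_0,\dots,G_{\ell-1}$, since it cannot be a right-to-left maximum without lengthening the Zeilberger set; the entries $v_{\ell+1},\dots,v_r$ are small values forming a decreasing block immediately after $v_\ell$, and $G_\ell=\cdots=G_{r-1}=\emptyset$; and, crucially, once $n-\ell-1$ is confined to the first $\ell$ gaps, condition (ii) becomes automatic. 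Conversely, every choice of (a) which of $G_0,\dots,G_{\ell-1}$ receives $n-\ell-1$, together with (b) an assignment of each of $1,\dots,n-\ell-2$ either to one of $G_0,\dots,G_{\ell-1}$ or to the trailing decreasing block of further right-to-left maxima, determines (after sorting each $G_j$ decreasingly) an admissible $w$ with $\zeil(w)=\ell$.

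\emph{Step 3: counting.} Thus for $1\le\ell\le n-2$ there are $\ell\cdot(\ell+1)^{\,n-\ell-2}$ admissible $w$ with $\zeil(w)=\ell$ — there are $\ell$ options in (a) and $\ell+1$ options in (b) for each of the $n-\ell-2$ remaining values — while $\ell=n-1$ gives only $w=\delta_{n-1}$, i.e.\ $[\sigma]=[\delta_n]$. Summing,
\[
\left|\Av_n[\overline{12}34]\right|=1+\sum_{\ell=1}^{n-2}\ell(\ell+1)^{\,n-\ell-2}=1+\sum_{i=0}^{n-2}i(i+1)^{\,n-i-2},
\]
the $i=0$ term contributing $0$.

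\emph{Step 4: $\overline{12}43$, and the main obstacle.} Running the same program for $\overline{12}43$ interchanges ``before/after'' and ``increasing/decreasing'': the avoiders correspond to $w\in S_{n-1}$ in which every ascent top is a \emph{left}-to-right maximum and, for each ascent, the entries \emph{after} its top that exceed it are \emph{increasing}. Counting these by the reverse Zeilberger statistic of $[\sigma]$ yields, by the mirror of Steps 2--3, again $(m-1)m^{\,n-m-1}$ avoiders with reverse Zeilberger statistic $m$ for $2\le m\le n-1$, now with $[\iota_n]$ playing the exceptional role, so $\left|\Av_n[\overline{12}43]\right|=\left|\Av_n[\overline{12}34]\right|$. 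The main difficulty throughout is Step 2: proving that (i)--(ii) force the admissible $w$ to be this rigid, the delicate part being the simultaneous bookkeeping of condition (ii) over every ascent, which is precisely what shows it imposes nothing beyond excluding $n-\ell-1$ from the trailing block. (One could instead hope for a direct bijection $\Av_n[\overline{12}34]\leftrightarrow\Av_n[\overline{12}43]$, but these classes are not related by any of the eight trivial symmetries, so that route would require its own argument.)
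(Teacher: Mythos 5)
Your proposal is correct and follows essentially the same route as the paper: the paper's criterion is that every cyclic ascent top must lie in the (reverse) Zeilberger set of $[\sigma]$, which after anchoring the rotation at $n$ is exactly your conditions (i)--(ii), and the paper then performs the same count --- the $i+1$ elements of the Zeilberger set cut the circle into $i+1$ regions, each forced to be internally decreasing, with $n-i-1$ barred from the wrap-around region, giving $i(i+1)^{n-i-2}$ avoiders per value of the statistic and the isolated term $1$ from the monotone permutation. Your Step 2 assertions (including that condition (ii) is what forces the gaps after $v_\ell$ to be empty, and is otherwise automatic) all check out and reproduce the paper's region decomposition, so the only difference is that you reach the structure via right-to-left maxima of the linear part $w$ rather than directly from the Zeilberger set of $[\sigma]$.
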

\begin{proof}
For $2 \leq n \leq 3$, we directly verify the result, where all cyclic permutations of length $n$ avoid both $[\overline{12}34]$ and $[\overline{12}43]$.

For general $n \geq 4$, we first address $[\overline{12}43]$.
We claim that the following criterion is a necessary and sufficient condition for a cyclic permutation $[\sigma]\in[S_n]$ to avoid $[\overline{12}43]$: for any cyclic ascent in $[\sigma]$, i.e., two adjacent elements $\sigma_i < \sigma_{i+1}$ with indices taken modulo $n$, say from $a$ to $b>a$, we have that $b$ must be in the reverse Zeilberger set of $[\sigma]$.
To see that this is sufficient, if $[\sigma]$ satisfies this criterion, then for any cyclic ascent from $a$ to $b$, as $b$ itself is within the reverse Zeilberger set, reading from $b$, we encounter the elements strictly greater than $b$ in increasing order, and thus we cannot get a copy of $[\overline{12}43]$ using this cyclic ascent as our $\overline{12}$.
As $\overline{12}$ must come from a cyclic ascent and our cyclic ascent was chosen arbitrarily, we find $[\sigma]$ avoids $[\overline{12}43]$.
To see that this criterion is necessary, if there exists a cyclic ascent in $[\sigma]$, say from $a$ to $b>a$, such that $b$ is not in the reverse Zeilberger set of $[\sigma]$, then reading $[\sigma]$ starting from $b$, we cannot encounter the elements strictly greater than $b$ in increasing order.
This means there exist $c$ and $d$ where $b<c<d$ and $d$ is encountered before $c$, from which we find $abdc$ forms a copy of $[\overline{12}43]$.

Using this equivalent criterion, we determine $\left|\Av_n[\overline{12}43]\right|$ by casework on the reverse Zeilberger statistic.
Clearly, for any $[\sigma]\in[S_n]$, we have $2 \leq \zeil[\sigma^r] \leq n$.
Suppose $\zeil[\sigma^r]=i+1$ for some $1 \leq i \leq n-1$.
If $i=n-1$, then $[\sigma]=[\iota_n]$, which satisfies the criterion.
For $1 \leq i \leq n-2$, the elements $n-i,\dots,n$, in that order, separate $[\sigma]$ into $i+1$ regions between these $i+1$ elements, in which all other elements $1,\dots,n-i-1$ must be placed.
As $n-i-1$ is not in the reverse Zeilberger set, it cannot be placed in the region between $n$ and $n-i$, so there are $i$ options on which region it can be placed into.
All other $n-i-2$ elements can be placed into any of the $i+1$ regions, yielding a total of $i(i+1)^{n-i-2}$ assignments.
In each region, the elements must be in decreasing order so that all cyclic ascents have the larger element in the reverse Zeilberger set.
This is necessary and sufficient to satisfy the criterion, and each of the $i(i+1)^{n-i-2}$ assignments yields a unique permutation with this property.
Summing over all $1 \leq i \leq n-2$ gives
\[ \left|\Av_n[\overline{12}43]\right| = 1 + \sum_{i=1}^{n-2} i(i+1)^{n-i-2},\]
as desired.

The proof for $[\overline{12}34]$ is essentially identical to the $[\overline{12}43]$ argument, where we instead replace the reverse Zeilberger set with the Zeilberger set.
\end{proof}

We note that $\left|\Av_{n+2}[\overline{12}34]\right|=1+\sum_{i=1}^n i(i+1)^{n-i}$ has a pattern avoidance interpretation using barred patterns; see Pudwell \cite{pudwell2010enumeration}.

The size of the avoidance class for class (C) was found in \cite{domagalski2021cyclic} to be the Catalan numbers: $\left|\Av[\overline{13}24]\right|=C_{n-1}$.

We now enumerate Wilf equivalence class (D), but first we provide the necessary definitions for this sequence.
The sequence $D_n$, studied previously in \cite{mansour2011restricted,sapounakis2007counting}, is given by the Catalan-like recurrence
\begin{equation*}
    D_n = \sum_{k=1}^{n-3} D_k D_{n-k}
\end{equation*}
for $n \geq 4$, with $D_1 = 2$, $D_2 = 1$, and $D_3 = 1$.
The next few values are $D_4 = 2$, $D_5 = 5$, and $D_6 = 13$.
For $n > 1$, the number $D_n$ counts the number of Dyck paths of semilength $n-1$ that avoid UUDD; see Sapounakis, Tasoulas, and Tsikouras \cite{sapounakis2007counting}.
Sapounakis et al.\ also proved the following expression for $D_{n+1}$:
\begin{equation*}
    D_{n+1} = \sum_{j=0}^{\left\lfloor n/2 \right\rfloor} \frac{(-1)^j}{n-j} \binom{n-j}{j}\binom{2n-3j}{n-j-1}.
\end{equation*}
Mansour and Shattuck \cite{mansour2011restricted} determined this sequence's ordinary generating function is
\begin{equation*}
    \sum_{n \geq 1} D_n z^n = \frac{(z+1)^2-\sqrt{1-4z+2z^2+z^4}}{2}.
\end{equation*}
We show that the size of the avoidance class for class (D) is enumerated by the sequence $D_n$.
\begin{theorem}\label{theorem: vincular 4 one vinculum D}
For all $n \geq 1$, we have $\left|\Av_n[\overline{13}42]\right| = D_{n+1}$.
\end{theorem}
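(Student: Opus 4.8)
The plan is to establish the recurrence
\[
  |\Av_n[\overline{13}42]| = 2\,|\Av_{n-1}[\overline{13}42]| + \sum_{\ell=1}^{n-3} |\Av_{\ell+1}[\overline{13}42]| \cdot |\Av_{n-2-\ell}[\overline{13}42]|
\]
for all $n \ge 3$, with base values $|\Av_1[\overline{13}42]| = |\Av_2[\overline{13}42]| = 1$. Granting this, the theorem follows by induction: using $D_1 = 2$ to split off the $k=1$ term, the recurrence from the statement gives $D_{n+1} = \sum_{k=1}^{n-2} D_k D_{n+1-k} = 2 D_n + \sum_{k=2}^{n-2} D_k D_{n+1-k}$, and a routine reindexing (exploiting the symmetry of the convolution) identifies the last sum with $\sum_{\ell=1}^{n-3} D_{\ell+2} D_{n-1-\ell}$, which matches the displayed recurrence after substituting $|\Av_j[\overline{13}42]| = D_{j+1}$; the base values agree with $D_2 = D_3 = 1$.

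To prove the recurrence I would first record a local description of avoidance. A cyclic permutation $[\sigma]$ contains $[\overline{13}42]$ exactly when it has a cyclically adjacent pair, $a$ immediately followed by $b$, with $a < b$, such that reading the remaining $n-2$ entries clockwise starting just after $b$ (and ending just before $a$) some entry larger than $b$ occurs before some entry lying strictly between $a$ and $b$. Equivalently, $[\sigma] \in \Av_n[\overline{13}42]$ if and only if for every such ``cyclic ascent'' $a \to b$, reading clockwise from $b$ every entry in the open interval $(a,b)$ precedes every entry larger than $b$. Observe that the cyclic ascent whose top entry is $n$ imposes no condition at all, which is what makes the following deletion argument work.

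Next I would delete the entry $1$. If $[\sigma] \in \Av_n[\overline{13}42]$ and $e, b$ are the entries immediately before and after $1$, then removing $1$ (and reducing) creates exactly one new adjacency, $e \to b$; when $e < b$ the condition it imposes is implied by the one that the former cyclic ascent $1 \to b$ already satisfied, since $(e,b) \subseteq (1,b)$ while the set of entries exceeding $b$ is unchanged, and every other cyclic ascent retains its condition verbatim. (This is the instance of the phenomenon noted at the end of \cref{section: preliminaries}: a vincular pattern need not be inherited by subsequences, but here, with care, it is.) So deletion of $1$ is a well-defined map $\Av_n[\overline{13}42] \to \Av_{n-1}[\overline{13}42]$, and I would count its fibers. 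Reinserting a new minimum immediately before an entry $b$ of a fixed $[\tau] \in \Av_{n-1}[\overline{13}42]$ gives an element of $\Av_n[\overline{13}42]$ precisely when, reading $[\tau]$ clockwise from $b$, every entry below $b$ precedes every entry above $b$; this is automatic when $b$ is the least or the greatest entry of $[\tau]$, which contributes the $2\,|\Av_{n-1}[\overline{13}42]|$ term.

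The crux is the remaining case, in which $b$ is neither extreme. Then I would write $[\tau]$, read clockwise from $b$, as $b, L, M, H'$, where $L$ lists the entries below $b$; inspecting the cyclic ascent from the last entry of $L$ into the block of entries above $b$ forces the maximum $M$ of $[\tau]$ to come right after $L$, and $H'$ lists the entries strictly between $b$ and $M$. I would then verify that the conditions coming from the cyclic ascents internal to $L$ are exactly the avoidance conditions for the cyclic permutation formed from $L$ by appending a new maximum, and similarly for $H'$, while the cross ascent into $M$ and all the descents impose nothing further. Since a cyclic permutation of size $m$ is the same thing as a linear permutation of size $m-1$ with a new maximum appended (write it with its maximum last), the pairs $([\tau], b)$ of this kind with $|L| = \ell$ number $|\Av_{\ell+1}[\overline{13}42]| \cdot |\Av_{n-2-\ell}[\overline{13}42]|$; summing over $1 \le \ell \le n-3$, with no overlap against the $b = \min, \max$ cases once $n - 1 \ge 2$, gives the recurrence. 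I expect the main obstacle to be exactly this fiber analysis: determining which cyclic ascents survive the reinsertion, and proving that the surviving internal conditions on $L$ and on $H'$ are equivalent to avoidance of the corresponding smaller cyclic permutations, with the maximum $M$, the entry $b$, and the small entries wedged between the two blocks producing no extra interaction.
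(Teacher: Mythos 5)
Your proposal is correct and follows essentially the same route as the paper: the same Catalan-like recurrence, obtained by deleting the entry $1$ and observing that the rest of the permutation splits at the entry $b$ following $1$ into a low block and a high block (with the maximum forced to head the high block), each of which is an avoider of smaller length once its own maximum is appended. Your packaging as a fiber count for the deletion map, together with the explicit local reformulation of avoidance (for each cyclic ascent $a\to b$, all entries in $(a,b)$ must precede all entries above $b$ when reading from $b$), is a cleaner way to organize the same casework the paper carries out on $\sigma_2=m$, and all the key claims you flag (the forced position of the maximum, the equivalence of the internal conditions on $L$ and $H'$ with avoidance of the smaller cyclic permutations, and the index bookkeeping) check out.
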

\begin{proof}
The result clearly holds for $n \leq 4$.
For $n > 4$, we must show that
\begin{align*}
    \left|\Av_n[\overline{13}42]\right| &= \sum_{k=1}^{n-2} \left|\Av_{k-1}[\overline{13}42]\right|\left|\Av_{n-k}[\overline{13}42]\right|
    \\&= 2\left|\Av_{n-1}[\overline{13}42]\right|+\sum_{k=1}^{n-3} \left|\Av_k[\overline{13}42]\right|\left|\Av_{n-k-1}[\overline{13}42]\right|,
\end{align*}
where $\left|\Av_0[\overline{13}42]\right|$ is defined to be $D_1=2$.
Consider $[\sigma]$ of length $n > 4$ that avoids $[\overline{13}42]$, where we assume $\sigma_1 = 1$.
Let $\sigma_2 = m$, where $2 \leq m \leq n$.
We will proceed by casework on the value of $m$, where each of the $n-1$ values of $m$ corresponds to one of the $n-3$ terms in the desired sum, or one of the two copies of $\left|\Av_{n-1}[\overline{13}42]\right|$.
In this casework, it will be important to reference specific elements of the cyclic permutation $[\sigma]$, so for clarity we will use indices with respect to the linear permutation $\sigma$, which is well-defined from $[\sigma]$ as assuming $\sigma_1=1$ fixes the particular rotation, i.e., the particular element of $[\sigma]$, that we use.
However, we still must avoid the cyclic pattern $[\overline{13}42]$, so when concerning ourselves with pattern avoidance we will return to discussing the cyclic permutation $[\sigma]$, rather than the linear permutation $\sigma$.

If $m=2$, then the only additional $[\overline{13}42]$ patterns that could arise from removing 1 would be when the vinculum $\overline{13}$ bridges over the removed 1, i.e., when 2 plays the role of 3 in the pattern $\overline{13}42$.
However, as removing 1 would make 2 the smallest element, this is not possible.
So removing 1 and reducing takes $[\sigma]$ to a $[\overline{13}42]$-avoiding cyclic permutation of length $n-1$.
Conversely, increasing the values of all elements in a $[\overline{13}42]$-avoiding cyclic permutation of length $n-1$ by one and then inserting a 1 before the 2 yields a $[\overline{13}42]$-avoiding cyclic permutation of length $n$, as the only additional $[\overline{13}42]$ patterns that could potentially arise from this insertion would require 1 to be part of the pattern, in which it would have to play the role of 1, but then 2 could not play the role of 3 for no element has value strictly between 1 and 2.
Thus, there is a natural bijection from the $m=2$ subcase to $\Av_{n-1}[\overline{13}42]$ via removing 1 and reducing, i.e., subtracting 1 from each element, which yields $\left|\Av_{n-1}[\overline{13}42]\right|$ possibilities.

Similarly, if $m = n$, then clearly 1 cannot be part of a $[\overline{13}42]$ pattern, and removing 1 cannot create additional $[\overline{13}42]$ patterns with $n$ playing the role of 3, as $n$ is the largest element.
Hence there is a natural bijection from the $m=n$ subcase to $\Av_{n-1}[\overline{13}42]$ via removing 1 and reducing.
This yields the second copy of $\left|\Av_{n-1}[\overline{13}42]\right|$ needed in our recursion.

We now address $3 \leq m \leq n-1$.
Notice that we must have all elements in $[2,m-1]$ come before all the elements of $[m+1,n]$ in $\sigma$, so $\sigma$ is of the form $1m\rho\tau$ where $\rho$ is a permutation of $[2,m-1]$ and $\tau$ is a permutation of $[m+1,n]$.
To avoid a $[\overline{13}42]$ pattern in $[\sigma]$ where the last element of $\rho$ plays the role of 1, we require $\tau_1=n$, as otherwise we can use the last element of $\rho$ as the 1, $\tau_1$ as the 3, $n$ as the 4, and $m$ as the 2 in an occurrence of $[\overline{13}42]$.
So $\sigma$ is of the form $1m\rho n \tau'$ where $\tau'$ is a permutation of $[m+1,n-1]$.
Clearly if $[\sigma]$ is $[\overline{13}42]$-avoiding, then so too are $[m\rho]$ and $[n\tau']$, as the additional ascent from the last element of $\rho$ to $m$ cannot be the $\overline{13}$ in a $\overline{13}42$ pattern, as there is no element higher than $m$ in $m\rho$ to be the 4 in the pattern, and similarly the additional ascent from the last element of $\tau'$ to $n$ cannot be the $\overline{13}$ in a $\overline{13}42$ pattern.

We now show that if $[m\rho]$ and $[n\tau']$ are both $[\overline{13}42]$-avoiding for $\rho$ a permutation of $[2,m-1]$ and $\tau'$ a permutation of $[m+1,n-1]$, then $[1m\rho n \tau']\in[S_n]$ is also $[\overline{13}42]$-avoiding.
We proceed by casework on the position of the cyclic ascent playing the role of $\overline{13}$ in a potential $[\overline{13}42]$ pattern in $[1m\rho n \tau']$, and show we cannot find a pair of elements to be the 4 and the 2.
If the cyclic ascent is 1 to $m$, then as all elements smaller than $m$ are before all the elements greater than $m$ in $1m\rho n \tau'$, we cannot form a $[\overline{13}42]$ pattern.
The pair of adjacent elements $m$ and $\rho_1$ forms a cyclic descent, so we continue onward and consider a cyclic ascent in $\rho$.
If a $[\overline{13}42]$ pattern existed in $[1m\rho n\tau']$ where the $\overline{13}$ is within $\rho$, then clearly the 2 in the pattern must also come from $\rho$.
If the 4 also comes from $\rho$, then $[m\rho]$ would have contained $[\overline{13}42]$, contradicting the assumption that $[m\rho]$ avoids $[\overline{13}42]$.
Otherwise, the 4 comes from outside $\rho$, and notice that using $m$ in place of this element still yields a $[\overline{13}42]$ pattern, which again implies $[m\rho]$ contains $[\overline{13}42]$, a contradiction.
The cyclic ascent from the last element of $\rho$ to $n$ cannot be the $\overline{13}$ in a $[\overline{13}42]$ pattern, as there is no element larger than $n$ to be the 4.
We have a cyclic descent from $n$ to $\tau'_1$, as well as a cyclic descent from the last element of $\tau'$ to 1, so the last case to consider is a cyclic ascent within $\tau'$ being the $\overline{13}$.
If a $[\overline{13}42]$ pattern existed in $[1m\rho n\tau']$ where the $\overline{13}$ is within $\tau'$, then clearly the 2 in the pattern must also come from $\tau'$.
If the 4 also comes from $\tau'$, then this same subsequence is in $[n\tau']$, contradicting the assumption that $[n\tau']$ avoids $[\overline{13}42]$.
Otherwise, the 4 comes from outside $\tau'$, and notice that using $n$ in place of this element still yields a $[\overline{13}42]$ pattern, which again implies $[n\tau']$ contains $[\overline{13}42]$, a contradiction.
Thus, $[1m\rho n\tau']$ avoids $[\overline{13}42]$.

In particular, each $[\overline{13}42]$-avoiding permutation of $[2,m]$ has a unique linear representation as $m\rho$ and each $[\overline{13}42]$-avoiding permutation of $[m+1,n]$ has a unique linear representation as $n\tau'$, which is crucial as our expression of $\sigma$ as $1m\rho n\tau'$ is a linear expression.
As $m\rho$ is of length $m-1$ and $n\tau'$ is of length $n-m$, letting $k=n-m$ yields $n\tau'$ is of length $k$ and $m\rho$ is of length $n-k-1$, where $1 \leq k \leq n-3$ as $m$ ranges within $3 \leq m \leq n-1$, which gives us the sum from the desired recurrence.
\end{proof}

We now enumerate classes (E) and (F), demonstrating that (C), (E), and (F) are Wilf equivalent, namely being enumerated by the Catalan numbers.
We first introduce Catalan's triangle.
\begin{definition}[Entries of Catalan's triangle]
For $0 \leq k \leq n$, define $T(n,k)=\frac{n-k+1}{n+1}\binom{n+k}{n}$.
\end{definition}
This creates a triangle of entries, where rows correspond to values of $n$ and columns correspond to values of $k$.
It is well-known that the row-sums are the Catalan numbers.
\begin{lemma}[\protect{\cite[Lemma 1]{bailey1996counting}}]\label{lemma: Catalan triangle row-sum Catalan}
For all $n \geq 0$, we have
\begin{equation*}
    \sum_{k=0}^n T(n,k) = C_{n+1}.
\end{equation*}
\end{lemma}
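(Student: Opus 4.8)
The plan is to avoid induction entirely by combining the Pascal-type recurrence for Catalan's triangle with a telescoping sum. First I would extract from the closed form $T(n,k)=\frac{n-k+1}{n+1}\binom{n+k}{n}$ the two ingredients I need: the boundary values $T(n,0)=1$ and $T(n,n)=\frac{1}{n+1}\binom{2n}{n}=C_n$, and the recurrence $T(n,k)=T(n,k-1)+T(n-1,k)$ for $1\le k\le n$, where I adopt the convention $T(n-1,n)=0$ (which is exactly what the closed form gives, since $\frac{0}{n}\binom{2n-1}{n-1}=0$, and which makes the recurrence hold at the boundary $k=n$ as well). Checking the recurrence is a routine binomial computation: using Pascal's rule in the form $\binom{n+k-1}{n}+\binom{n+k-1}{n-1}=\binom{n+k}{n}$ together with the ratio $\binom{n+k-1}{n-1}=\tfrac{n}{k}\binom{n+k-1}{n}$, the claim collapses to the polynomial identity $k(n-k+2)+(n+1)(n-k)=(n+k)(n-k+1)$.

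With those in hand, write $S_n=\sum_{k=0}^n T(n,k)$, peel off the $k=0$ term, apply the recurrence to the rest, and reindex the two resulting sums:
\[
S_n = 1 + \sum_{k=1}^{n}\bigl(T(n,k-1)+T(n-1,k)\bigr) = 1 + \bigl(S_n - T(n,n)\bigr) + \bigl(S_{n-1}-1\bigr),
\]
where $\sum_{k=1}^n T(n,k-1)=\sum_{j=0}^{n-1}T(n,j)=S_n-T(n,n)$ and $\sum_{k=1}^n T(n-1,k)=S_{n-1}-T(n-1,0)=S_{n-1}-1$ (the $k=n$ summand of the latter being $0$ by the convention). Cancelling $S_n$ yields $S_{n-1}=T(n,n)=C_n$, i.e.\ $S_n=C_{n+1}$; the base case $S_0=T(0,0)=1=C_1$ checks out.

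I expect the only friction to be bookkeeping: fixing the boundary conventions so the recurrence applies uniformly across a full row (especially at $k=n$), and verifying the binomial identity underlying the recurrence. As a cleaner alternative I would give a bijective proof. Here $T(n,k)$ is the number of lattice paths with unit north and east steps from $(0,0)$ to $(n,k)$ that stay weakly below the line $y=x$ (the classical ballot interpretation of the closed form). Appending one east step and then $n+1-k$ north steps carries such a path to a path from $(0,0)$ to $(n+1,n+1)$ that stays weakly below the diagonal; conversely, cutting an arbitrary such path immediately before its last east step recovers a path counted by $T(n,k)$ for the appropriate $k$. This is a bijection, and the paths from $(0,0)$ to $(n+1,n+1)$ weakly below the diagonal number $C_{n+1}$, which gives the identity.
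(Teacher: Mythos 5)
Your proof is correct. Note, however, that the paper does not prove this lemma at all: it is imported verbatim as Lemma~1 of the cited reference (Bailey), so there is no internal argument to compare against. Your telescoping derivation is sound: the Pascal-type recurrence $T(n,k)=T(n,k-1)+T(n-1,k)$ (which is just the differenced form of the partial-sum recurrence the paper records as its other Catalan-triangle lemma) checks out, the boundary convention $T(n-1,n)=0$ is consistent with the closed form, and the cancellation of $S_n$ legitimately yields $S_{n-1}=T(n,n)=C_n$ for every $n\geq 1$, which together with $S_0=1=C_1$ gives the claim without induction. The bijective alternative via ballot paths and the last east step is also a complete and standard proof. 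Either argument would serve as a self-contained substitute for the citation; the only stylistic remark is that the algebraic version silently relies on $k\geq 1$ when invoking $\binom{n+k-1}{n-1}=\frac{n}{k}\binom{n+k-1}{n}$, which is exactly the range where you apply the recurrence, so nothing is lost.
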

It is also well-known that $T(n,k)$ satisfies the following recurrence.
We will use the convention that $T(n,n+1)=0$, which is consistent with the original definition $T(n,k)=\binom{n+k}{n}\frac{n-k+1}{n+1}$ when $k=n+1$.
\begin{lemma}[\protect{\cite[Lemma 1]{bailey1996counting}}]\label{lemma: Catalan triangle recurrence}
For all $n \geq 1$ and $0 \leq k \leq n$, we have
\[ T(n,k) = \sum_{j=0}^k T(n-1,j).\]
\end{lemma}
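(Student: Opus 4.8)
The plan is to reduce the claimed identity to a single-step relation and then telescope. It suffices to establish the two facts
\begin{equation*}
T(n,0) = T(n-1,0) = 1, \qquad T(n,k) = T(n,k-1) + T(n-1,k) \ \text{ for all } 1 \le k \le n .
\end{equation*}
Granting these, a one-line induction on $k$ finishes the proof: the base case $k = 0$ is the first fact, and for $k \ge 1$ we have $T(n,k) = T(n,k-1) + T(n-1,k) = \sum_{j=0}^{k-1} T(n-1,j) + T(n-1,k) = \sum_{j=0}^{k} T(n-1,j)$. The first fact is immediate from the closed form, since $T(m,0) = \frac{m+1}{m+1}\binom{m}{m} = 1$ for every $m \ge 0$.

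So the real content is the single-step relation $T(n,k) - T(n,k-1) = T(n-1,k)$, which I would verify by a direct manipulation of $T(n,k) = \frac{n-k+1}{n+1}\binom{n+k}{n}$. Extracting the common factor $\frac{(n+k-1)!}{n!\,(k-1)!}$, one writes $T(n,k) = \frac{(n-k+1)(n+k)}{(n+1)k}\cdot\frac{(n+k-1)!}{n!\,(k-1)!}$ and $T(n,k-1) = \frac{n-k+2}{n+1}\cdot\frac{(n+k-1)!}{n!\,(k-1)!}$, so the difference equals $\frac{(n+k-1)!}{n!\,(k-1)!}\cdot\frac{(n-k+1)(n+k) - k(n-k+2)}{(n+1)k}$. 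The one slightly delicate step is the factorization of the numerator,
\begin{equation*}
(n-k+1)(n+k) - k(n-k+2) = n(n+1) - k(n+1) = (n+1)(n-k),
\end{equation*}
after which the factor $n+1$ cancels and the difference collapses to $\frac{(n-k)(n+k-1)!}{n!\,k!} = \frac{n-k}{n}\binom{n+k-1}{n-1} = T(n-1,k)$, as needed.

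I would add one remark on the endpoint: when $k = n$ the inductive step refers to $T(n-1,n)$, an entry of the shape $T(m,m+1)$, which equals $0$ — consistent with the paper's stated convention and visible directly from the closed form, since the factor $m-(m+1)+1$ vanishes — so no boundary issue arises. The main obstacle here is purely computational: organizing the common-denominator rewriting and nailing the cancellation $(n+1)(n-k)$; there is no conceptual difficulty. If one preferred a bijective argument, an equivalent route is to interpret $T(n,k)$ as the number of lattice paths from $(0,0)$ to $(n,k)$ with unit east and north steps staying weakly below the diagonal $y = x$, and to decompose such a path according to the height $j \le k$ at which it enters its final column, which gives $T(n,k) = \sum_{j=0}^{k} T(n-1,j)$ directly; but since the closed form is already in hand, the algebraic proof above is the shortest.
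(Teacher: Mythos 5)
Your proof is correct; the only caveat is that the paper does not prove this lemma at all --- it simply cites it as \cite[Lemma 1]{bailey1996counting} and records the convention $T(n,n+1)=0$ --- so there is no internal argument to compare against. Your reduction to the single-step relation $T(n,k)=T(n,k-1)+T(n-1,k)$ together with $T(n,0)=1$ is the standard way to verify the column-sum recurrence, and the algebra checks out: the numerator indeed factors as $(n-k+1)(n+k)-k(n-k+2)=n(n+1)-k(n+1)=(n+1)(n-k)$, and the resulting $\frac{(n-k)(n+k-1)!}{n!\,k!}$ equals $T(n-1,k)$. You were also right to flag the endpoint $k=n$: the term $T(n-1,n)$ vanishes under the paper's stated convention (and from the closed form), so the identity holds with the sum formally extended to $j=k=n$. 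Your alternative lattice-path decomposition by the entry height into the last column is likewise a valid bijective route and arguably closer in spirit to how the inductive step is used later in the paper (Theorems \ref{theorem: vincular 4 one vinculum E Catalan} and \ref{theorem: vincular 4 one vinculum F Catalan} map $A(n,i)$ onto a union $\bigcup_{j\le i}A(n-1,j)$ in exactly this shape), but the algebraic verification you give is complete as it stands.
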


We first enumerate class (E).

\begin{theorem}\label{theorem: vincular 4 one vinculum E Catalan}
For all $n \geq 1$, we have $\left|\Av_n[\overline{14}23]\right|=C_{n-1}$.
\end{theorem}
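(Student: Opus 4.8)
The plan is to pass to a canonical linear representative, turn $[\overline{14}23]$-avoidance into a combinatorial condition, refine the count of avoiders by a single statistic, and identify that refinement with Catalan's triangle. First I would fix, for each $[\sigma]\in[S_n]$, the rotation with $\sigma_1=1$, and record the parameter $k=n-\sigma_2$, so that $0\le k\le n-2$ (the case $k=n-1$ being excluded since $\sigma_2\ne 1$). The key structural observation is that in any occurrence of $\overline{14}23$ the entry in the role of ``$1$'' is the minimum of the four chosen entries, so the anchor $\sigma_1=1$ is never used as the ``$2$'' or ``$3$'' of a pattern; this yields the characterization that $[\sigma]$ avoids $[\overline{14}23]$ if and only if, for every cyclic ascent $\sigma_i<\sigma_{i+1}$ (indices mod $n$), the entries whose value lies strictly between $\sigma_i$ and $\sigma_{i+1}$, read cyclically once around starting just after $\sigma_{i+1}$, occur in strictly decreasing order. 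Indeed, any violation gives an earlier pair among those entries which, together with $\sigma_i,\sigma_{i+1}$, forms a copy of the pattern, and conversely such a copy forces a violation. This is the same style of reverse-Zeilberger argument as in \cref{theorem: vincular 4 one vinculum A B}; applied to the single ascent $1\to\sigma_2$ it already forces the values $2,3,\dots,\sigma_2-1$ to appear in decreasing order after position $2$.

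Next, let $g(n,k)$ be the number of $[\sigma]\in\Av_n[\overline{14}23]$ with $\sigma_1=1$ and $\sigma_2=n-k$. Using the characterization, I would prove the recurrence
\begin{equation*}
    g(n,k)=\sum_{j=0}^{k} g(n-1,j),
\end{equation*}
together with the base values $g(2,0)=1$ and $g(n,0)=1$ (the unique length-$n$ avoider with $\sigma_2=n$ being $1,n,n-1,\dots,2$). The natural mechanism is to insert the largest value $n$ into an $[\overline{14}23]$-avoider of length $n-1$: inserting $n$ immediately after an entry $x$ creates exactly one new cyclic ascent, namely $x\to n$, and destroys the adjacency it splits, so avoidance is preserved precisely when $x$ is ``good,'' meaning that reading once around starting just after $x$ the entries exceeding $x$ are decreasing. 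Inserting $n$ after the anchor $1$ is good only for the single length-$(n-1)$ avoider $1,(n-1),(n-2),\dots,2$ and produces parameter $0$, whereas inserting $n$ after any other good entry of an avoider of parameter $k-1$ produces parameter $k$. I expect the bookkeeping here — checking that the good entries across the length-$(n-1)$ avoiders of parameter $k-1$ are counted so that their total is $\sum_{j=0}^{k}g(n-1,j)$, and in particular controlling the cyclic wraparound in the reading ``starting just after $x$'' when $x$ sits far from the anchor — to be the main obstacle of the proof.

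Granting the recurrence and base cases, \cref{lemma: Catalan triangle recurrence} gives $g(n,k)=T(n-2,k)$ by induction on $n$ (the index shift being chosen so that a length-$n$ avoider corresponds to row $n-2$ of the triangle, which one should verify carefully). Summing over $k$ and applying \cref{lemma: Catalan triangle row-sum Catalan} then yields
\begin{equation*}
    \left|\Av_n[\overline{14}23]\right|=\sum_{k=0}^{n-2} g(n,k)=\sum_{k=0}^{n-2} T(n-2,k)=C_{n-1},
\end{equation*}
with the cases $n\le 2$ handled by direct inspection. This lands on $C_{n-1}$ rather than $C_n$ exactly because of the shift down by two, which is a small but essential sanity check.
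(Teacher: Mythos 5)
Your setup is sound: the characterization of $[\overline{14}23]$-avoidance via cyclic ascents (for each ascent $a\to b$, the entries valued strictly between $a$ and $b$ must appear in decreasing order reading onward from $b$) is correct, and refining the count by a statistic matching row $n-2$ of Catalan's triangle is the right strategy. The gap is in the recurrence. With your statistic $k=n-\sigma_2$, deleting $n$ from a parameter-$k$ avoider (for $k\ge1$) always lands in the \emph{single} class of parameter $k-1$, so what your insertion argument actually computes is
\begin{equation*}
 g(n,k)=\sum_{\tau}\#\{\text{good insertion points of }\tau\},
\end{equation*}
the sum running over length-$(n-1)$ avoiders $\tau$ of parameter $k-1$. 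The number of good points is not constant over that class --- for $n=5$ and $k=2$, the two parameter-$1$ avoiders $[1324]$ and $[1342]$ have $3$ and $2$ good entries respectively --- so the identity you need, that this weighted sum equals $\sum_{j=0}^{k}g(n-1,j)$, is a genuine combinatorial claim requiring its own bijection or induction. It is exactly the step you flag as ``the main obstacle'' without supplying an argument, so as written the proof does not close.

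The fix is to choose the refinement so that deletion of $n$ is \emph{itself} a bijection onto a union of classes. The paper refines by the entry $i$ immediately preceding $n$: avoidance forces the Zeilberger set of $[\sigma]$ to be exactly $[i,n]$, deleting $n$ sends such a permutation to one whose entry preceding $n-1$ is some $j\le i$, and the inverse is the uniquely determined operation ``insert $n$ after $i$.'' This yields a clean unweighted bijection $A(n,i)\to\bigcup_{j=1}^{i}A(n-1,j)$, hence $|A(n,i)|=T(n-2,i-1)$ directly from \cref{lemma: Catalan triangle recurrence}, and the row sum finishes as in your final display. If you insist on the statistic $n-\sigma_2$, you would have to prove the good-entry identity separately, which in effect reconstructs that bijection in disguise.
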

\begin{proof}
For $n \leq 3$, we directly verify the result, where all cyclic permutations of length $n$ avoid $[\overline{14}23]$.

We now assume $n \geq 4$.
Let $i$ be a positive integer satisfying $1 \leq i \leq n-1$.
Let $A(n,i)$ denote the set of cyclic permutations $[\sigma]\in\Av_n[\overline{14}23]$ that have $i$ directly before $n$.
We will show that $|A(n,i)|=T(n-2,i-1)$.
Summing over $i$ would then yield
\[ \left|\Av_n[\overline{14}23]\right| = \sum_{i=1}^{n-1} T(n-2,i-1) = C_{n-1}, \]
using \cref{lemma: Catalan triangle row-sum Catalan}.
We prove this by induction on $n$, where it is trivial to verify this for the base case $n=3$.

For the inductive step, assume the result holds for $n-1$; using \cref{lemma: Catalan triangle recurrence}, it suffices to show that $A(n,i)$ is in bijection with $\bigcup_{j=1}^i A(n-1,j)$, which we denote $B(n,i)$.
Our bijection $\phi:A(n,i)\to B(n,i)$ will be to simply delete $n$ from a cyclic permutation of length~$n$.

We first show that this mapping takes cyclic permutations in $A(n,i)$ to cyclic permutations in $B(n,i)$.
As we have a cyclic ascent from $i$ to $n$, in order for $[\sigma]\in A(n,i)$ to avoid $[\overline{14}23]$, we must have the elements strictly between $i$ and $n$ appearing in decreasing order when reading from $n$, which implies that the Zeilberger set of $[\sigma]$ is equal to the interval $[i,n]$.
If the element preceding $n-1$ in $[\sigma]$ is $n$, then the element preceding $n-1$ in $\phi([\sigma])$ is $i$.
If the element preceding $n-1$ in $[\sigma]$ is not $n$, then this element is strictly less than $i$, for any element $k$ where $i \leq k < n$ occurs before $n$ when reading from $n-1$.
Therefore, the element preceding $n-1$ in $\phi([\sigma])$ is some $j$ for $1 \leq j \leq i$.
To show that $\phi([\sigma])$ is $[\overline{14}23]$-avoiding, as $[\sigma]$ is $[\overline{14}23]$-avoiding, it suffices to show that the newly created adjacency between $i$ and the element after it after removing $n$ cannot create a copy of $[\overline{14}23]$ where $i$ plays the role of 1.
In order for this to be a copy of $[\overline{14}23]$, we require $i$ to ascend to some element $k>i$, and as $[i,n-1]$ is arranged in descending order, this means $i$ may only ascend to $n-1$, but in this case we cannot find elements to play the roles of 2 and 3 for $[i,n-1]$ is in descending order.
Hence, $\phi([\sigma])$ is $[\overline{14}23]$-avoiding and $\phi([\sigma])\in B(n,i)$.

To prove $\phi$ is a bijection, we provide the inverse map $\psi$, which we claim is the operation that adds $n$ after $i$.
It suffices to show that this mapping sends elements of $B(n,i)$ to elements of $A(n,i)$, as once this is shown it clearly follows from the definition of $\psi$ that $\psi\circ\phi([\sigma])=[\sigma]$ for all $[\sigma]\in A(n,i)$ and $\phi\circ\psi([\tau])=[\tau]$ for all $[\tau]\in B(n,i)$.
Consider some cyclic permutation $[\tau]\in B(n,i)$.
Clearly, we have $i$ directly before $n$ in $\psi([\tau])$.
To show that $\psi([\tau])$ is still $[\overline{14}23]$-avoiding, as $n$ can only ever play the role of 4, it suffices to show that the cyclic ascent from $i$ to $n$ cannot be a part of a copy of $[\overline{14}23]$.
Equivalently, we must show that the elements in $[i,n]$ appear in decreasing order, when starting from $n$.
As $[\tau]$ is $[\overline{14}23]$-avoiding with a cyclic ascent from $j \leq i$ to $n-1$, we know $[j,n-1]$ appears in decreasing order, and thus $[i,n-1]$ appears in decreasing order in $[\tau]$ as well as $\psi[\tau]$.
As $n$ is inserted right after $i$ and thus between $i$ and $n-1$, we find the elements of $[i,n]$ appear in decreasing order.

Thus $\phi$ is a bijection, and the proof is complete.
\end{proof}

We now enumerate class (F).
The proof is similar in structure to that of \cref{theorem: vincular 4 one vinculum E Catalan}, but is slightly more involved and uses a different refinement than $A(n,i)$ from the previous proof.

\begin{theorem}\label{theorem: vincular 4 one vinculum F Catalan}
For all $n \geq 1$, we have $\left|\Av_n[\overline{14}32]\right|=C_{n-1}$.
\end{theorem}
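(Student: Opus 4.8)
The plan is to imitate the proof of \cref{theorem: vincular 4 one vinculum E Catalan}, but to refine $\Av_n[\overline{14}32]$ according to a different statistic. First I would record the following reformulation of avoidance: a copy of $[\overline{14}32]$ in $[\sigma]$ is exactly a cyclic ascent from a value $a$ to an adjacent value $b$ together with two values $c>d$ lying strictly between $a$ and $b$ such that $c$ occurs before $d$ when $[\sigma]$ is read starting immediately after this occurrence of $b$. Hence $[\sigma]$ avoids $[\overline{14}32]$ if and only if, for every cyclic ascent $a\to b$, the values in the open interval $(a,b)$ occur in increasing order when $[\sigma]$ is read starting just after $b$.

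Next I would introduce the refining statistic: for $[\sigma]\in[S_n]$ let $i^*([\sigma])$ be the least $i$ such that $i,i+1,\dots,n$ is a subsequence of some rotation of $\sigma$; equivalently $i^*([\sigma])=n+1-\zeil[\sigma^r]$ in terms of the reverse Zeilberger statistic. One has $1\le i^*([\sigma])\le n-1$, with $i^*([\sigma])=1$ exactly when $[\sigma]=[\iota_n]$. Set $A(n,i)=\{[\sigma]\in\Av_n[\overline{14}32]:i^*([\sigma])=i\}$ for $1\le i\le n-1$. The heart of the argument is the claim $|A(n,i)|=T(n-2,i-1)$, proved by induction on $n$; granting it, summing over $i$ and applying \cref{lemma: Catalan triangle row-sum Catalan} yields $\left|\Av_n[\overline{14}32]\right|=\sum_{k=0}^{n-2}T(n-2,k)=C_{n-1}$.

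For the inductive step, by \cref{lemma: Catalan triangle recurrence} it suffices to biject $A(n,i)$ with the disjoint union $\bigcup_{j=1}^{i}A(n-1,j)$, and the map is deletion of the entry $n$. I would check three things: (i) deleting $n$ preserves $[\overline{14}32]$-avoidance, since the only new adjacency created, say $p\to q$, is harmless because if $p<q$ then the avoidance condition at the old ascent $p\to n$ forces $q=p+1$, so $(p,q)$ is empty; (ii) deleting $n$ from the increasing cyclic run $i,i+1,\dots,n$ leaves the increasing cyclic run $i,\dots,n-1$, so the image lies in $\bigcup_{j\le i}A(n-1,j)$; and (iii) the inverse map reinserts $n$ immediately after the value $i-1$ when $i^*$ of the shorter permutation is strictly less than $i$, and immediately after the value $n-1$ when it equals $i$, the relevant facts being that inserting $n$ directly after a value $a$ preserves avoidance precisely when $i^*([\tau])\le a$, and that this insertion produces a permutation with $i^*$ equal to $a+1$ (if $a\le n-2$) or to $i^*([\tau])$ (if $a=n-1$).

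The main obstacle I anticipate is verifying that deletion of $n$ is actually a bijection onto $\bigcup_{j=1}^{i}A(n-1,j)$, i.e., that the reinsertion rule in (iii) genuinely inverts it. Concretely, given $[\sigma]\in A(n,i)$ whose image $[\tau]$ under deletion has $i^*([\tau])=j\le i$, one must show that the entry directly preceding $n$ in $[\sigma]$ is forced to be $i-1$ when $j<i$ and $n-1$ when $j=i$. A priori the predecessor of $n$ could be various smaller values, and each such possibility is ruled out by locating, among the large values read after that occurrence of $n$, a descent, which would contradict the avoidance of $[\sigma]$. This tracking of the statistic $i^*$ through the newly created adjacency is exactly the extra complication compared with \cref{theorem: vincular 4 one vinculum E Catalan}, where the analogous refinement (the entry directly before $n$) is manifestly preserved by the bijection.
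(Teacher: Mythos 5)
Your proposal is correct and follows essentially the same route as the paper's proof: the same refinement by the reverse Zeilberger statistic (your $i^*([\sigma])=n+1-\zeil[\sigma^r]$ is just a reindexing of the paper's $E(n,i)$, defined by $\zeil[\sigma^r]=n-i$), the same identification of the refined counts with Catalan's triangle via \cref{lemma: Catalan triangle recurrence}, and the same delete-$n$/reinsert-$n$ bijection with the identical two-case reinsertion rule. The ``main obstacle'' you flag --- forcing the predecessor of $n$ --- is resolved in the paper exactly as you predict, by exhibiting a descent $i+1,i$ among the values in $(x,n)$ read after $n$, contradicting avoidance.
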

\begin{proof}
For $n \leq 3$, we directly verify the result, where all cyclic permutations of length $n$ avoid $[\overline{14}32]$.

We now assume $n \geq 4$.
Let $i$ be a positive integer satisfying $0 \leq i \leq n-2$.
Let $E(n,i)$ denote the set of cyclic permutations $[\sigma]\in\Av_n[\overline{14}32]$ that have $\zeil[\sigma^r]=n-i$.
We will show by induction on $n$ that $|E(n,i)|=T(n-2,i)$.
As $2\leq\zeil[\sigma^r]\leq n$, summing over all $i$ satisfying $0 \leq i \leq n-2$ would then yield
\[ \left|\Av_n[\overline{14}32]\right| = \sum_{i=0}^{n-2} T(n-2,i) = C_{n-1}, \]
by \cref{lemma: Catalan triangle row-sum Catalan}.
It is trivial to verify the base case $n=3$.

For the inductive step, assume the result holds for $n-1$; using \cref{lemma: Catalan triangle recurrence}, it suffices to show that $E(n,i)$ is in bijection with $\bigcup_{j=0}^{i} E(n-1,j)$, which we denote $F(n,i)$.
Our bijection $\phi:E(n,i)\to F(n,i)$ will be to simply delete $n$ from $[\sigma]$, similar to the proof of \cref{theorem: vincular 4 one vinculum E Catalan}.

We first show that this mapping takes $[\sigma]\in E(n,i)$ to some $[\tau]\in F(n,i)$.
As $\zeil[\sigma^r]=n-i$, we find $i+1,i+2,\dots,n$ is a subsequence of $[\sigma]$, and thus $\phi([\sigma])$ has $i+1,\dots,n-1$ as a subsequence, so $n-1-i\leq\zeil[\phi([\sigma])^r]\leq n-1$.
Thus $\zeil[\phi([\sigma])^r]=n-1-j$ for some $0 \leq j \leq i$.
To show that $\phi([\sigma])\in\Av_{n-1}[\overline{14}32]$, suppose the element preceding $n$ is $a$ and the element following $n$ is $b$.
It suffices to show that the removal of $n$ cannot have caused a copy of $[\overline{14}32]$ with $a,b$ forming the vinculum $\overline{14}$.
As $[\sigma]$ is $[\overline{14}32]$-avoiding and $a,n$ form a cyclic ascent, we know that reading from $a$, the elements in $[a+1,n-1]$ are encountered in increasing order.
Hence in order for $a,b$ to be a cyclic ascent, $b$ must equal $a+1$, but then $a,b$ cannot form $\overline{14}$ as there are no elements with values between $a$ and $b=a+1$.
Thus, $\phi([\sigma])\in F(n,i)$.

To prove $\phi$ is a bijection, we provide the inverse map $\psi$.
For $[\tau]\in F(n,i)$ with $\zeil[\tau^r]=n-1-j$ for $0 \leq j \leq i$, if $j=i$ then define $\psi([\tau])$ to be the cyclic permutation obtained by inserting $n$ immediately after $n-1$, and if $j<i$ define $\psi([\tau])$ to be the cyclic permutation obtained by inserting $n$ immediately after $i$.
We first show that this mapping sends elements of $F(n,i)$ to elements of $E(n,i)$.
Notice that $[\tau]$ has $j+1,j+2,\dots,n-1$ as a subsequence, as $\zeil[\tau^r]=n-1-j$.

If $j=i$, then adding $n$ immediately after $n-1$ means $\zeil[\psi([\tau])^r]=n-i$, as desired.
To show that $\psi([\tau])$ is $[\overline{14}32]$-avoiding, it suffices to show that $n$ cannot be a part of a copy of $[\overline{14}32]$.
If it were, then $n$ must be the 4 in the $[\overline{14}32]$ pattern, but then $n-1$ would have to play the role of 1, leaving no possible elements to play the roles of 3 and 2.
Hence $\psi([\tau])\in E(n,i)$.

Otherwise if $j<i$, then we have that $j+1,\dots,i,n,i+1,\dots,n-1$ is a subsequence of $\psi([\tau])$, from which we can immediately see that the reverse Zeilberger set is the subsequence $i+1,\dots,n-1,n$, and thus $\zeil[\psi([\tau])^r]=n-i$.
To show that $\psi([\tau])$ is $[\overline{14}32]$-avoiding, it suffices to show that $n$ cannot be a part of a copy of $[\overline{14}32]$, where it would have to play the role of 4, and thus $i$ would have to play the role of 1.
Reading from $n$, we encounter the elements of $[i+1,n-1]$ in increasing order, and thus we cannot find two elements in $[i+1,n-1]$ to complete the $[\overline{14}32]$ pattern.
Thus $\psi([\tau])\in E(n,i)$.

It clearly follows from the definitions of $\phi$ and $\psi$ that $\phi\circ\psi([\tau])=[\tau]$ for all $[\tau]\in F(n,i)$.
To show $\psi\circ\phi([\sigma])=[\sigma]$ for all $[\sigma]\in E(n,i)$, notice that $[\sigma]$ has $i+1,i+2,\dots,n-1,n$ as a subsequence.

If $\phi([\sigma])\in E(n-1,i)$, then $\psi\circ\phi([\sigma])$ is obtained by inserting $n$ immediately after $n-1$ in $\phi([\sigma])$.
So, we must show that there does not exist an element between $n-1$ and $n$ in $[\sigma]$.
If $i=0$, then $[\sigma]=[\iota_n]$ and this holds.
Otherwise, element $i\geq 1$ is not located between $n-1$ and $i+1$ (reading in the forward direction), as $\zeil[\phi([\sigma])^r]=n-1-i$.
So $i$ is somewhere between $i+1$ and $n-1$.
If there was an element between $n-1$ and $n$, suppose the element immediately before $n$ was $x$.
Notice that $x<i$, so $x,n,i+1,i$ forms a $[\overline{14}32]$ pattern, contradicting the fact that $[\sigma]$ avoids $[\overline{14}32]$.
Hence, there is no element between $n-1$ and $n$ in $[\sigma]$, which yields $\psi\circ\phi([\sigma])=[\sigma]$.

Otherwise $\zeil[\phi([\sigma])^r]>n-1-i$, so $i$ is somewhere between $n-1$ and $i+1$.
As $\zeil[\sigma]=n-i$, however, $i$ cannot be between $n$ and $i+1$, so this means $i$ is somewhere between $n-1$ and $n$.
Hence $[\sigma]$ has $i+1,i+2,\dots,n-1,i,n$ as a subsequence.
In this case, $\psi\circ\phi([\sigma])$ is obtained by inserting $n$ immediately after $i$ in $\phi([\sigma])$, so we must show that there does not exist an element between $i$ and $n$ in $[\sigma]$.
If there were, suppose the element immediately before $n$ was $x$.
Note that $x<i$, so $x,n,i+1,i$ forms a $[\overline{14}32]$ pattern, contradicting the fact that $[\sigma]$ avoids $[\overline{14}32]$.
Therefore, there is no element between $i$ and $n$ in $[\sigma]$, which yields $\psi\circ\phi([\sigma])=[\sigma]$.

Hence, $\phi$ is a bijection, and the proof is complete.
\end{proof}

Lastly, we enumerate class (G) in terms of the number of strongly monotone partitions of $[n]$, which we now define.
\begin{definition}
A partition of $[n]$ is \emph{strongly monotone} if, when its parts are sorted so that their minimum elements are in increasing order, then their maximum elements are also in increasing order.
\end{definition}
Let $A_n$ denote the number of strongly monotone partitions of $[n]$.
Claesson and Mansour \cite{claesson2005enumerating} showed this sequence has the ordinary generating function
\[ \sum_{n \geq 0} A_n x^n = \frac{1}{1-x-x^2 B^*(x)},\]
where $B^*(x)$ denotes the ordinary generating function of the Bessel numbers, originally introduced by Flajolet and Schott \cite{flajolet1990non}.
While the size of the avoidance classes of (G) does not previously appear as a sequence in the OEIS \cite{sloane2018line}, it can be expressed in terms of $A_n$.

\begin{theorem}\label{theorem: vincular 4 one vinculum G strongly monotone}
For all $n \geq 2$, we have 
\[ \left|\Av_n[\overline{23}14]\right| = \sum_{i=0}^{n-2} \binom{n-2}{i} A_i.\]
\end{theorem}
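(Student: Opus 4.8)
The plan is to fix a convenient rotation, translate $[\overline{23}14]$-avoidance into a condition on the maximal descending runs of the underlying linear permutation, and then recognize a strongly monotone partition inside that data.

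First I would rotate $[\sigma]\in[S_n]$ so that $\sigma_1=n$; then $[\sigma]$ is determined by the linear permutation $\tau=\sigma_2\cdots\sigma_n$ of $[n-1]$, and a copy of $[\overline{23}14]$ is a cyclic occurrence, namely a pair of cyclically adjacent entries $e<f$ such that, reading around the circle starting just after $f$, one meets an entry $c<e$ and then a later entry $d>f$. I would split on the adjacent ascent $e\to f$. The wrap ascent $\sigma_n\to n$ can never serve as $\overline{23}$ (there is no $d>n$), and the ascent out of the entry $1$ can never either (there is no $c<1$); for every remaining internal ascent $\sigma_p<\sigma_{p+1}$, using that $n=\sigma_1$ sits immediately after $\sigma_n$ in the circular reading and $n>\sigma_{p+1}$, one gets that avoidance is equivalent to the conjunction, over all such $p$, of: (i) every entry among $\sigma_{p+2},\dots,\sigma_n$ exceeds $\sigma_p$; and (ii) among $\sigma_2,\dots,\sigma_{p-1}$ there is no entry smaller than $\sigma_p$ followed by an entry larger than $\sigma_{p+1}$.

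Next I would recast (i) and (ii) using the maximal descending runs $\tau=R_1R_2\cdots R_m$, with $a_j=\max R_j$ and $b_j=\min R_j$; the internal ascents are exactly the run boundaries, and one automatically has $b_j<a_{j+1}$. Because runs are decreasing, no single run contributes a forbidden configuration to (i) or (ii), so everything reduces to relations among the $a_j$ and $b_j$. A short case analysis (on whether $|R_{j+1}|=1$) shows that (i) holds for all $j$ if and only if $b_1<b_2<\cdots<b_m$, and, granting this, (ii) holds for all $j$ if and only if $a_2<a_3<\cdots<a_m$, with $a_1$ unconstrained. In particular $b_1=1$, so $1\in R_1$, and $b_j<a_{j+1}$ is automatic from $b_j<b_{j+1}\le a_{j+1}$.

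Finally I would turn this into the stated count. Given such a $\tau$, the blocks $R_2,\dots,R_m$ in this order have strictly increasing minima and strictly increasing maxima, hence constitute a strongly monotone partition of $[n-1]\setminus R_1$; conversely, any subset $R_1\subseteq[n-1]$ with $1\in R_1$, together with any strongly monotone partition of $[n-1]\setminus R_1$ whose blocks are listed by increasing minimum, reassembles, upon arranging each block in decreasing order and concatenating, into a valid $\tau$ whose run decomposition is exactly $(R_1,R_2,\dots,R_m)$. This is a bijection. The number of strongly monotone partitions of an $i$-element set is $A_i$ by order-isomorphism, and the number of valid $R_1$ with $|[n-1]\setminus R_1|=i$ is $\binom{n-2}{i}$ (choose the $n-2-i$ elements of $R_1\setminus\{1\}$ from $\{2,\dots,n-1\}$), so summing over $i$ yields $\left|\Av_n[\overline{23}14]\right|=\sum_{i=0}^{n-2}\binom{n-2}{i}A_i$; the argument is uniform in $n\ge 2$. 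I expect the main difficulty to be the second and third paragraphs: extracting the clean equivalence with (i) and (ii), then translating it into the run characterization while correctly handling edge cases (ascents adjacent to $1$ or $n$, and singleton runs in condition (ii)). Once the run characterization is established, the rest is bookkeeping.
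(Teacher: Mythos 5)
Your proposal is correct and follows essentially the same route as the paper: both characterize $[\overline{23}14]$-avoiders by decomposing into maximal decreasing runs with increasing maxima, increasing minima for all runs except the distinguished one containing $n$ and $1$ (which is freely chosen), and then biject the remaining runs with strongly monotone partitions to get $\sum_i \binom{n-2}{i}A_i$. The only difference is that you derive the run characterization directly from a case analysis on which cyclic ascent can serve as the vinculum, whereas the paper obtains the two monotonicity conditions by reducing to Claesson's characterization of $1\overline{32}$-avoiding linear permutations; this is a cosmetic, not structural, difference.
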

\begin{proof}
We directly verify the result for $n \leq 3$, where $A_0=A_1=1$.

For general $n \geq 4$, consider $[\sigma]\in\Av_n[\overline{23}14]$.
Without loss of generality suppose that $\sigma_n = 1$, and say $n$ is at index $k$ for $1 \leq k \leq n-1$, i.e., $\sigma_k = n$.
The linear permutation $\sigma^{(n)}=\sigma_{k+1}\cdots\sigma_n\sigma_1\cdots\sigma_{k-1}$, i.e., the linear permutation obtained by rotating $\sigma$ so that $n$ is the last element and then removing $n$, must be $\overline{23}1$-avoiding.
We may partition $\sigma^{(n)}$ into blocks of consecutive elements, where each block is a maximal decreasing sequence, so that the transition from one block to the next is an ascent.
Claesson \cite[Proposition 5]{claesson2001generalized} characterized the $1\overline{32}$-avoiding permutations, where after partitioning a permutation into blocks of maximal increasing sequences, the permutation avoids $1\overline{32}$ if and only if the minima of the blocks are in decreasing order.
By reversing this characterization, we find the minimum elements of each block of $\sigma^{(n)}$ must be in increasing order.

Similarly, the linear permutation $\sigma^{(1)}=\sigma_1\cdots\sigma_{n-1}$ must be $3\overline{12}$ avoiding, as any copy of $3\overline{12}$ in $\sigma^{(1)}$ along with $\sigma_n=1$ would yield a copy of $14\overline{23}$, or a copy of $[\overline{23}14]$ in $[\sigma]$.
We partition $\sigma^{(1)}$ into blocks of maximal decreasing consecutive sequences.
By complementing Claesson's characterization \cite{claesson2001generalized} of $1\overline{32}$-avoiding permutations, we find the maximum elements of each block of $\sigma^{(1)}$ must be in increasing order.

Combining these two observations, we find that partitioning $\sigma$ into blocks of maximal decreasing consecutive sequences, the maximum elements of each block of $\sigma$ are in increasing order, the minimum elements of all but the last block of $\sigma$ are in increasing order, and the last block starts with $n$ and ends with 1.
See \cref{fig: _23_14 schematic} for a schematic diagram of this characterization of $[\sigma]$.

\begin{figure}[htbp!]
    \centering
    \begin{tikzpicture}[scale=1,thick]
        \draw[Circle-Circle] (0,1) -- (1,0);
        \draw[Circle-Circle] (1,2) -- (2,1);
        \draw[Circle-Circle] (2,3) -- (3,2);
        \draw[Circle-Circle] (3,4) -- (4,3);
        \draw[Circle-Circle] (4,5) -- (6,-1);
        \node at (4,5.3) {$n$};
        \node at (6,-1.3) {1};
    \end{tikzpicture}
    \caption{Schematic diagram of the plot of a $[\overline{23}14]$-avoiding cyclic permutation.}
    \label{fig: _23_14 schematic}
\end{figure}
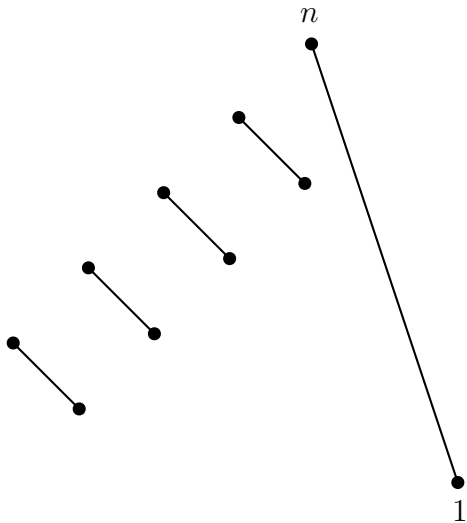

Our previous argument demonstrates that this characterization is a necessary condition for $[\sigma]$ to be $[\overline{23}14]$-avoiding.
We now show that this characterization is sufficient.
In order to have a copy of $[\overline{23}14]$ in such a cyclic permutation $[\sigma]$, we must have a cyclic ascent to play the role of $\overline{23}$, and in particular an ascent only occurs in between two blocks.
If the ascent is to the last block, it ascends to $n$, which cannot play the role of 3, but rather must play the role of 4; likewise, if the ascent is from the last block, it ascends from 1, which cannot play the role of 2, but rather must play the role of 1.
Otherwise the ascent is between two blocks, neither of which is the last block.
So our ascent is from the minimum element $a$ of block $j$ to the maximum element $b$ of block $j+1$, for some $j$.
In order to get an element smaller than $a$ to act as our 1, clearly we must go at least to the last block, the block from $n$ to 1.
But then it is impossible to get an element smaller than $b$ to act as our 4 before crossing $a$ again.

Thus, we now have a characterization of all $[\overline{23}14]$-avoiding permutations, as depicted in \cref{fig: _23_14 schematic}.
Suppose there are $i$ total elements in the blocks excluding the last block from $n$ to $1$, so that there are $n-i$ elements in the last block.
As the last block contains at least two elements, namely $n$ and 1, we find $0 \leq i \leq n-2$.
There are $\binom{n-2}{i}$ ways to pick the $i$ elements in these blocks, and the remaining $n-2-i$ elements other than $n$ and 1 must be in decreasing order between $n$ and 1.
Consider the sequence $\sigma' = \sigma_1\cdots\sigma_i$, i.e. the sequence consisting of all but the last block of $\sigma$.
The reduction of $\sigma'$ is a permutation on $[i]$, and it is easy to see that the partitioning of $\sigma'$ into its blocks, which satisfy the increasing minima and maxima conditions, provides a one-to-one correspondence to the strongly monotone partitions of $[i]$, where each block corresponds to one of the sets in a partitioning of $[i]$.
Hence there are $A_i$ ways to arrange the $i$ elements before the block from $n$ to 1.
Summing over all $0 \leq i \leq n-2$ thus yields
\[ \left|\Av_n[\overline{23}14]\right| = \sum_{i=0}^{n-2} \binom{n-2}{i} A_i,\]
as desired.
\end{proof}

We leave the enumeration of (H), which currently does not appear in the OEIS \cite{sloane2018line}, as an open problem.

\begin{remark}\label{remark: mansour shattuck 2341}
Since the writing of the original version of this paper, the case of avoiding $[\overline{23}41]$ has been dealt with by Mansour and Shattuck \cite{mansour2021enumerating}, where an explicit formula for the ordinary generating function enumerating the class has been found.
\end{remark}

\section{Minimal unavoidable sets of totally vincular patterns}\label{section: unavoidable sets of totally vincular patterns}
We call a vincular pattern of length $n$ with $n-1$ vincula \emph{totally vincular}.
A totally vincular pattern corresponds to requiring that all $n$ elements are adjacent, and thus when considering whether a permutation avoids a totally vincular pattern, we only consider consecutive blocks of $n$ elements.
Let $\overline{S_k}$ denote the set of totally vincular patterns of length $k$.
For a permutation $\pi \in S_k$, let the corresponding totally vincular pattern be denoted $\opi\in\overline{S_k}$.

In this section we consider which sets $\Pi\subseteq \overline{S_k}$ cannot be avoided by sufficiently long permutations, i.e., $\left|\Av_n[\Pi]\right|=0$ for all sufficiently large $n$.
We call such a set of patterns \emph{unavoidable}, and otherwise a set of patterns is \emph{avoidable}.
We say a set of patterns $\Pi$ is a \emph{minimal} unavoidable set if $\Pi$ is unavoidable, but any proper subset $\Pi'\subset \Pi$ is avoidable.

Very little is known about unavoidable pattern sets for ordinary permutations: see, for example, Wilf \cite[Section 10]{wilf2002patterns}.
On the other hand, the unavoidability of patterns is a well-studied and frequently-asked question for pattern avoidance of words, where letters come from a fixed finite alphabet but are allowed to be repeated; in the study of words, almost all pattern avoidance is implicitly totally vincular.

Recall that \cref{proposition: multiple vincular 3 doubleton 0} along with the enumeration in \cref{section: vincular 3} implies that the sets $\Pi$ consisting of the two totally vincular patterns of length 3 containing a 1 in the $i$th position for a fixed $i$, or the element-wise complements of these sets, i.e., the sets consisting of all possible patterns containing a 3 in the $i$th position, were minimal unavoidable sets.
We show that for general $k$, not just $k=3$, the same construction yields minimal unavoidable sets.

In particular, let $\Pi_{i,k}$ denote the set of all totally vincular patterns $\opi\in \overline{S_k}$ where $\opi_i = 1$ for $1 \leq i \leq k$.
This naturally means $\Pi_{i,k}^c$ is the set of all totally vincular patterns $\opi\in \overline{S_k}$ where $\opi_i = k$.
We then have the following theorem.

\begin{theorem}\label{theorem: vincular pi_i-k minimal unavoidable set}
For all positive integers $k$ and $i$ where $1 \leq i \leq k$, the sets $\Pi_{i,k}$ and $\Pi_{i,k}^c$ are minimal unavoidable sets.
\end{theorem}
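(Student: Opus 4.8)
The plan is to prove that $\Pi_{i,k}$ (and hence $\Pi_{i,k}^c$) is unavoidable, and that every proper subset of $\Pi_{i,k}$ is avoidable. For unavoidability I would show $\left|\Av_n[\Pi_{i,k}]\right|=0$ for all $n\ge k$: given $[\sigma]\in[S_n]$, look at the block of $k$ cyclically consecutive entries of $\sigma$ that has the entry $1$ in its $i$th position; since $1$ is the global minimum, the reduction $\tau$ of this block has $\tau_i=1$, so $\overline{\tau}\in\Pi_{i,k}$ and $[\sigma]$ contains a pattern of $\Pi_{i,k}$. Because complementing permutations is a bijection $\Av_n[\Pi]\to\Av_n[\Pi^c]$ for any $\Pi\subseteq\overline{S_k}$, and $\Pi_{i,k}^c$ is exactly the complement of $\Pi_{i,k}$, the set $\Pi_{i,k}^c$ is unavoidable as well, and it suffices to prove minimality of $\Pi_{i,k}$.

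For minimality, fix $\opi\in\Pi_{i,k}$, i.e.\ $\pi\in S_k$ with $\pi_i=1$; I must exhibit, for infinitely many $n$, a cyclic permutation of length $n$ avoiding $\Pi_{i,k}\setminus\{\opi\}$. The first step is a reformulation: $[\sigma]$ avoids $\Pi_{i,k}\setminus\{\opi\}$ if and only if every block of $k$ cyclically consecutive entries whose minimum lies in its $i$th position reduces to $\pi$ (such a block is a copy of $\overline{\tau}$ where $\tau$ is its reduction, $\overline{\tau}\in\Pi_{i,k}$, and $\overline{\tau}\ne\opi$ precisely when $\tau\ne\pi$). Say an entry $x$ of $\sigma$ is a \emph{center entry} if $x$ is strictly smaller than each of the $i-1$ entries immediately preceding it and each of the $k-i$ entries immediately following it (cyclically); then the blocks whose minimum is at position $i$ are exactly the blocks having a center entry in their $i$th position. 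Since $1$ is always a center entry, it is enough to build, for all large $n$, a cyclic permutation in which $1$ is the \emph{only} center entry and the block having $1$ in its $i$th position reduces to $\pi$.

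The construction places a copy of $\pi$ in $k$ consecutive positions and puts the remaining $n-k$ values in a monotone run. Using the reverse operation, which gives a bijection $\Av_n[\Pi_{i,k}\setminus\{\opi\}]\leftrightarrow\Av_n[\Pi_{k-i+1,k}\setminus\{\overline{\pi^r}\}]$, I may assume $i\le\lceil k/2\rceil$. If $2i\le k$, set $\sigma=c_1\cdots c_k\,(n-k+1)(n-k)\cdots 2$, where $c_1\cdots c_k$ is the sequence on the value set $\{1\}\cup\{n-k+2,\dots,n\}$ that is order isomorphic to $\pi$ (so $c_i=1$); if $2i=k+1$, which forces $k$ odd, set instead $\sigma=b_1\cdots b_k\,n\,(n-1)\cdots(k+1)$, where $b_1\cdots b_k$ is the sequence on $\{1,\dots,k\}$ order isomorphic to $\pi$. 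For $n$ large enough that the tail is longer than $k$, I would check in each case that $1$ is the only center entry (and then the unique block having a center entry in its $i$th position is the $\pi$-block, which reduces to $\pi$). Since $i\le k-1$, an entry immediately followed by a smaller one is not a center entry, so the only candidates are the ascent bottoms of $\sigma$: those internal to the $\pi$-block, and the entries appearing at the two seams between the $\pi$-block and the tail. When $2i\le k$, a large $\pi$-block entry is never a center entry because its length-$k$ window must reach into the strictly smaller tail, an interior entry of the decreasing tail has a strictly smaller entry just after it, and the tail entry adjacent to the $\pi$-block has $1$ inside its window exactly because $2i\le k$; when $2i=k+1$, the bounds $2i-k\le j\le 2i-1$ hold for all $j\in[k]$, so $1$ lies in the window of every other $\pi$-block entry, while the tail is handled as before.

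The hard part will be making this last verification go through uniformly over all $\pi\in S_k$ with $\pi_i=1$: one has to pick the right value set for the $\pi$-block---the top $k-1$ values with a decreasing tail away from the middle position, the bottom $k-1$ values with a decreasing tail at the middle position---so that every ascent bottom of $\sigma$ other than $1$ is rescued, either by having the entry $1$ inside its length-$k$ window or by having a strictly smaller tail entry there. The obstruction is concentrated at the middle position $2i=k+1$, where a seam entry's window can miss $1$ on either side (this is precisely why the top-values construction fails there and the bottom-values one is needed); the reversal symmetry then takes care of every remaining position.
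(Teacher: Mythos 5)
Your proposal is correct, and its skeleton coincides with the paper's: the same one-line unavoidability argument (the window of $k$ consecutive entries with the global minimum in position $i$), the same reductions by complementation and reversal to $i\le\frac{k+1}{2}$, and, for $2i\le k$, literally the same witness (a copy of $\opi$ on the value set $\{1\}\cup[n-k+2,n]$ followed by $[2,n-k+1]$ in decreasing order; the paper splits this into $i=1$ and $1<i<\frac{k+1}{2}$, but the construction and the verification are identical to yours). Where you genuinely diverge is the middle case $i=\frac{k+1}{2}$ with $k$ odd. The paper keeps the ``top values'' shape but enlarges the low part of the block, embedding $\opi$ into $\{1,2\}\cup[n-k+3,n]$ with tail $[3,n-k+2]$ decreasing, and then needs a further sub-case (reversing according to whether the $2$ lies before or after the $1$ in $\pi$) so that the $2$ can rescue the window centered at the last tail entry. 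You instead flip the construction entirely: the $\pi$-block sits on the bottom values $[k]$ and the tail $n,n-1,\dots,k+1$ is a decreasing run of the top values. This works, and more cleanly: since the window of a position $p$ is symmetric of radius $\frac{k-1}{2}$ when $2i=k+1$, the containment condition $2i-k\le j\le 2i-1$ becomes $1\le j\le k$, so \emph{every} block position sees the $1$ in its window; meanwhile every tail entry exceeds every block entry, so the seam windows are rescued automatically and the entry $2$ plays no special role. Your construction thus eliminates the paper's sub-case on the location of $2$. Your diagnosis of why the two constructions cannot be merged is also accurate: the bottom-values version fails for $2i\le k$ at the block position $j=k>2i-1$, and the top-values version fails at $2i=k+1$ at the last tail entry, whose window misses the $1$ by exactly one position. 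The only points worth making explicit in a write-up are the trivial base cases $k\le 2$ and the (standard) observation that avoidability of every set $\Pi_{i,k}\setminus\{\opi\}$ suffices for minimality because avoidability is closed under taking subsets.
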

\begin{proof}
We prove the result only for $\Pi_{i,k}$, as the $\Pi_{i,k}^c$ case then follows from trivial Wilf equivalence.
Likewise, by reversing $\Pi_{i,k}$ as necessary, we may assume $i \leq \frac{k+1}{2}$, as the other cases follow from trivial Wilf equivalence.

The case $k=1$ is trivial, as then $i=1$ and any element of a permutation of length $n \geq 1$ is order isomorphic to $1$, and $\Pi_{1,1}$ is a singleton set so a proper subset of $\Pi$ must be empty, which any permutation avoids.
The case $k=2$ is also trivial, as any cyclic permutation of length $n \geq 2$ must contain at least one copy of $[\overline{12}]$, which is the sole element of $\Pi_{1,2}$.
The case $k=3$ follows from \cref{proposition: multiple vincular 3 doubleton 0} and the enumerations of $\Av_n[\overline{123}]$ and $\Av_n[\overline{132}]$ from \cref{section: vincular 3}, where, in particular, these contain $[\delta_n]$ and $[\iota_n]$, respectively, showing that $\Pi_{i,k}$ is a minimal unavoidable set.

For general $k \geq 4$, we first show $\Pi_{i,k}$ is unavoidable for all $1 \leq i \leq \frac{k+1}{2}$, by showing no cyclic permutation of length at least $k$ avoids $\Pi_{i,k}$.
For any arbitrary $[\sigma]\in[S_n]$ for $n \geq k$, consider the consecutive subsequence of $k$ elements of $[\sigma]$ where the 1 is in position $i$ of the subsequence.
This consecutive subsequence must be order isomorphic to some element of $\Pi_{i,k}$, so no cyclic permutation of length $n \geq k$ can avoid $\Pi_{i,k}$.

To prove minimality of $\Pi_{i,k}$, we split into three cases: first, $i=1$; second, $1 < i < \frac{k+1}{2}$; and third, $i=\frac{k+1}{2}$ for odd $k$.

\textbf{Case 1.}
We first prove minimality for $i=1$.
It suffices to show that $\Pi_{1,k}\setminus\{\opi\}$ for some $\opi\in \overline{S_k}$ with $\opi_1 = 1$ is avoidable.
For $n \geq k$, consider $[\sigma]\in[S_n]$ given by
\[ \sigma = 1, n-k+\opi_2, n-k+\opi_3, \dots, n-k+\opi_k, n-k+1, n-k,\dots,2. \]
In other words, $\sigma$ consists of 1, followed by the permutation of $[n-k+2,n]$ that is order isomorphic to $\opi_2\cdots\opi_k$, following by the elements of $[2,n-k+1]$ in decreasing order.
See \cref{fig: minimal unavoidable case 1 schematic} for a schematic diagram of this construction of $\sigma$.
We claim that $[\sigma]$ avoids $\Pi_{1,k}\setminus\{\opi\}$.
We must show that for any cyclically consecutive sequence of $k$ elements of $\sigma$ for which the first element is the minimum element among these $k$ elements, this sequence is order isomorphic to $\opi$; otherwise, it would be order isomorphic to some other pattern in $\overline{S_k}$ that begins with 1, which by definition is an element of $\Pi_{1,k}\setminus\{\opi\}$.
As $n \geq k$, we find the $k$ consecutive elements starting at $\sigma_1=1$ are order isomorphic to $\opi$.
For any block of $k$ consecutive elements starting at $\sigma_j=n-k+\opi_j$ for $2 \leq j \leq k$, we have $\sigma_{k+1}=n-k+1<n-k+\opi_j$ is in this block of $k$ consecutive elements, so the first element $\sigma_j$ is not the minimum element among these $k$ consecutive elements.
For any block of $k$ consecutive elements starting at index $j$ for $k+1 \leq j \leq n$, i.e., starting in the portion decreasing from $n-k+1$ to 2, the second element is one smaller than the first element, so the first element is not the minimum element among these $k$ elements.
Hence, $[\sigma]$ avoids $\Pi_{1,k}\setminus\{\opi\}$, which is thus avoidable, implying $\Pi_{1,k}$ is a minimal unavoidable set of patterns.

\begin{figure}[htbp]
    \centering
    \begin{tikzpicture}[scale=0.8,thick]
        \filldraw (1,1) circle (2pt);
        \node at (1,0.5) {1};
        \filldraw (2,6) circle (2pt);
        \filldraw (3,7.5) circle (2pt);
        \filldraw (4,6.5) circle (2pt);
        \filldraw (5,7) circle (2pt);
        \draw[|-|] (1.8,8) -- (5.2,8);
        \node at (3.5,8.5) {$n+k+\opi_j$ for $2 \leq j \leq k$};
        \draw[Circle-Circle] (6,5.5) -- (10,1.5);
        \node at (7,5.8) {$n-k+1$};
        \node at (10,1) {2};
    \end{tikzpicture}
    \caption{Schematic diagram for the plot of $\sigma$ in Case 1 when $\opi = \overline{12534}$.}
    \label{fig: minimal unavoidable case 1 schematic}
\end{figure}
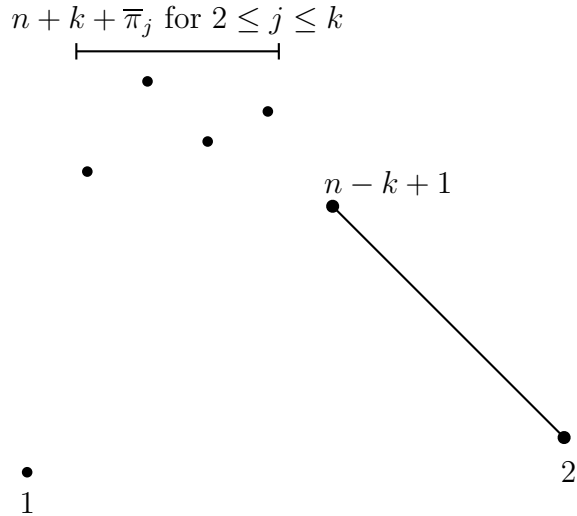

\textbf{Case 2.}
Next, we prove minimality for $1 < i < \frac{k+1}{2}$.
It suffices to show that $\Pi_{i,k}\setminus\{\opi\}$ for some $\opi\in\overline{S_k}$ with $\opi_i = 1$ is avoidable.
Express $\opi$ in the form $\opi=\overline{\rho1\tau}$ for $|\rho|<|\tau|$.
Consider the permutation $\sigma\in S_n$ where we take $\opi$ and map it to an order isomorphic permutation of $\{1\}\cup[n-k+2,n]$, and then append $[2,n-k+1]$ in decreasing order.
In other words,
\[ \sigma = n-k+\opi_1,\dots,n-k+\opi_{i-1},1,n-k+\opi_{i+1},\dots,n-k+\opi_k,n-k+1,n-k,\dots,2. \]
See \cref{fig: minimal unavoidable case 2 schematic} for a schematic diagram of this construction of $\sigma$.
We claim $[\sigma]$ avoids $\Pi_{i,k}\setminus\{\opi\}$.
Consider a consecutive subsequence of $k$ elements of $[\sigma]$.
If the $i$th element is 1, then the subsequence is order isomorphic to $\opi$, by construction.
It suffices to show if the $i$th element is not 1, then the $i$th element is not the minimum element in this consecutive subsequence, as then this subsequence cannot be order isomorphic to a pattern in $\Pi_{i,k}\setminus\{\opi\}$.
We now split into cases depending on the value of the $i$th element.
\begin{enumerate}
    \item If the $i$th element is in $[n-k+2,n]$, then it is either in the $\rho$ or $\tau$ portion of the permutation of $\{1\}\cup[n-k+2,n]$ order isomorphic to $\opi$.
    If it is in the $\tau$ portion, then this subsequence contains an element in $[2,n-k+1]$, which is smaller than the $i$th element.
    If it is in the $\rho$ portion, as $|\rho|<|\tau|$, the subsequence contains the next $|\tau|$ elements, which must include the 1, which is smaller than the $i$th element.
    \item If the $i$th element is in $[3,n-k+1]$, then the element of index $i+1$, which is part of the subsequence as $i<\frac{k+1}{2}\leq k$, is one less than it.
    \item If the $i$th element is 2, then the next $|\tau|>|\rho|$ elements are also in this subsequence, and namely this includes the 1, which is $|\rho|+1$ steps after the 2.
\end{enumerate}

\begin{figure}[htbp]
    \centering
    \begin{tikzpicture}[scale=0.8,thick]
        \filldraw (1,7.5) circle (2pt);
        \node at (2,0.5) {1};
        \filldraw (2,1) circle (2pt);
        \filldraw (3,6.5) circle (2pt);
        \filldraw (4,7) circle (2pt);
        \filldraw (5,6) circle (2pt);
        \draw[|-|] (0.8,8) -- (1.7,8);
        \draw[|-|] (2.3,8) -- (5.2,8);
        \draw[stealth-,dotted] (2,1.5) -- (2,8);
        \node at (3,8.5) {$n+k+\opi_j$ for $j\neq i$};
        \draw[Circle-Circle] (6,5.5) -- (10,1.5);
        \node at (7,5.8) {$n-k+1$};
        \node at (10,1) {2};
    \end{tikzpicture}
    \caption{Schematic diagram for the plot of $\sigma$ in Case 2 when $\opi = \overline{51342}$.}
    \label{fig: minimal unavoidable case 2 schematic}
\end{figure}

\textbf{Case 3.}
Third and finally, we prove minimality for $i=\frac{k+1}{2}$ when $k$ is odd.
It suffices to show that $\Pi_{i,k}\setminus\{\opi\}$ for some $\opi\in\overline{S_k}$ with $\opi_i = 1$ is avoidable.
Express $\opi$ in the form $\opi=\overline{\rho1\tau}$ for $|\rho|=|\tau|$.
We will prove the result when 2 is an element in $\rho$, as then by reversing, we prove the result for when 2 is an element in $\tau$.
Consider the permutation $\sigma$ where we take $\opi$ and map it to an order isomorphic permutation of $[1,2]\cup[n-k+3,n]$ and then append $[3,n-k+2]$ in decreasing order.
See \cref{fig: minimal unavoidable case 3 schematic} for a schematic diagram of this construction.
\begin{figure}[htbp]
    \centering
    \begin{tikzpicture}[scale=0.8,thick]
        \filldraw (1,7.5) circle (2pt);
        \filldraw (2,1.5) circle (2pt);
        \node at (2,1) {2};
        \filldraw (3,6.5) circle (2pt);
        \filldraw (4,1) circle (2pt);
        \node at (4,0.5) {1};
        \filldraw (5,5.5) circle (2pt);
        \filldraw (6,7) circle (2pt);
        \filldraw (7,6) circle (2pt);
        \draw[|-|] (0.8,8) -- (1.7,8);
        \draw[|-|] (2.3,8) -- (3.7,8);
        \draw[|-|] (4.3,8) -- (7.2,8);
        \draw[stealth-,dotted] (2,2) -- (2,8);
        \draw[stealth-,dotted] (4,1.5) -- (4,8);
        \node at (4,8.5) {$n+k+\opi_j$ for $\opi_j > 2$};
        \draw[Circle-Circle] (8,5) -- (11,2);
        \node at (9,5.3) {$n-k+2$};
        \node at (11,1.5) {3};
    \end{tikzpicture}
    \caption{Schematic diagram for the plot of $\sigma$ in Case 3 when $\opi = \overline{7251364}$.}
    \label{fig: minimal unavoidable case 3 schematic}
\end{figure}
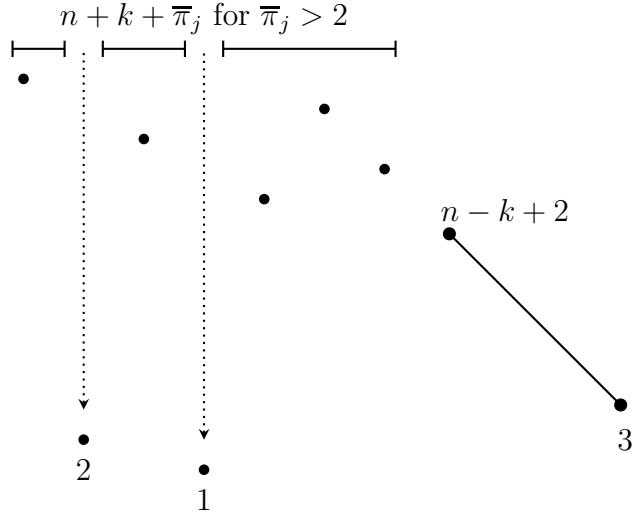

We claim $[\sigma]$ avoids $\Pi_{i,k}\setminus\{\opi\}$.
Consider a consecutive subsequence of $k$ elements of $[\sigma]$.
If the $i$th element is 1, then the subsequence is order isomorphic to $\opi$, by construction.
It suffices to show if the $i$th element is not 1, then the $i$th element is not the minimum element in this consecutive subsequence.
We now split into cases depending on the value of the $i$th element.
\begin{enumerate}
    \item If the $i$th element is in $[n-k+3,n]$, then it is either in the $\rho$ or $\tau$ portion of the permutation of $[1,2]\cup[n-k+3,n]$ order isomorphic to $\opi$.
    If it is in the $\tau$ portion, then this subsequence contains an element in $[3,n-k+2]$, which is smaller than the $i$th element.
    If it is in the $\rho$ portion, as $|\rho|=|\tau|$, the subsequence contains the next $|\tau|$ elements, which must include the 1, which is smaller than the $i$th element.
    \item If the $i$th element is in $[4,n-k+2]$, then the next element is one smaller than it, so it is not the minimum element in this subsequence.
    \item If the $i$th element is 3, then this block contains the 2 as the 2 is in the $|\rho|=|\tau|$ elements before the 1 and after the 3, so is not the minimum element.
    \item If the $i$th element is 2, as the 2 is in the $\rho$ portion of the permutation on the set $[1,2]\cup[n-k+3,n]$ order isomorphic to $\opi$, the subsequence contains the next $|\tau|=|\rho|$ elements, which must include the 1, so 2 is not the minimum.
\end{enumerate}

Hence, all $\Pi_{i,k}$ for $1 \leq i \leq k$ are minimal unavoidable sets.
\end{proof}

These sets $\Pi_{i,k}$ and $\Pi_{i,k}^c$ when $k=3$ correspond to the sets from \cref{proposition: multiple vincular 3 doubleton 0}.
It is easy to see from our analysis in \cref{section: vincular 3,section: multiple vincular 3} that these are the only minimal unavoidable subsets of $\overline{S_3}$.

For $k \geq 2$, viewing the power set of $\overline{S_k}$ as a Boolean lattice ordered by inclusion, it is clear that any chain can contain at most one minimal unavoidable set, as unavoidability is a monotone property.
Thus the number of minimal unavoidable sets is bounded above by the size of the maximum antichain of this lattice, which is $\binom{k!}{k!/2}$.
This yields the following proposition.
\begin{proposition}\label{proposition: upper bound minimal unavoidable sets}
For $k \geq 2$, the number of minimal unavoidable subsets of $\overline{S_k}$ is bounded above by $\binom{k!}{k!/2}$.
\end{proposition}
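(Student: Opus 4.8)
The statement to prove is \cref{proposition: upper bound minimal unavoidable sets}: for $k \geq 2$, the number of minimal unavoidable subsets of $\overline{S_k}$ is at most $\binom{k!}{k!/2}$. The plan is to exploit the monotonicity of unavoidability together with Dilworth/Sperner-type reasoning on the Boolean lattice. First I would observe that unavoidability is an upward-closed (monotone) property: if $\Pi \subseteq \overline{S_k}$ is unavoidable and $\Pi \subseteq \Pi'$, then $\Pi'$ is also unavoidable, since any permutation avoiding $\Pi'$ in particular avoids $\Pi$, so $\Av_n[\Pi'] \subseteq \Av_n[\Pi] = \emptyset$ for large $n$. Dually, avoidability is downward-closed.

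Next I would argue that the collection of all minimal unavoidable subsets forms an antichain in the Boolean lattice $2^{\overline{S_k}}$ ordered by inclusion. Indeed, suppose $\Pi_1 \subsetneq \Pi_2$ are both minimal unavoidable. Since $\Pi_1$ is a proper subset of $\Pi_2$ and $\Pi_1$ is unavoidable, the minimality of $\Pi_2$ is violated (its proper subset $\Pi_1$ fails to be avoidable). Hence no two distinct minimal unavoidable sets are comparable, so they form an antichain. Equivalently, any chain in the lattice meets the family of minimal unavoidable sets in at most one element — which is the phrasing already used in the excerpt.

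Finally I would invoke Sperner's theorem (or the fact that the maximum antichain in the Boolean lattice on a ground set of size $m$ has size $\binom{m}{\lfloor m/2 \rfloor}$), applied with $m = |\overline{S_k}| = k!$. Since $k \geq 2$ implies $k!$ is even, the bound $\binom{k!}{\lfloor k!/2 \rfloor} = \binom{k!}{k!/2}$ follows, giving the desired upper bound on the number of minimal unavoidable subsets. There is essentially no obstacle here: the only substantive content is the monotonicity observation and the reduction to Sperner's theorem, both of which are routine; the parity remark that $k!$ is even for $k \geq 2$ is what makes the middle binomial coefficient take the clean form $\binom{k!}{k!/2}$.
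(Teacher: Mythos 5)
Your proposal is correct and follows essentially the same route as the paper: unavoidability is monotone, so the minimal unavoidable sets form an antichain in the Boolean lattice on $\overline{S_k}$, and Sperner's theorem bounds its size by $\binom{k!}{k!/2}$. The paper's argument is exactly this (phrased via chains meeting the family at most once), so there is nothing to add.
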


We conjecture that the sets from \cref{theorem: vincular pi_i-k minimal unavoidable set}, which have cardinality $(k-1)!$, are the smallest unavoidable subsets of $\overline{S_k}$.
Intuitively, this states that these sets are the ``most efficient" at preventing permutations from avoiding them.
\begin{conjecture}\label{conjecture: minimum unavoidable set}
For all $k \geq 1$, the minimum cardinality of an unavoidable subset of $\overline{S_k}$ is $(k-1)!$.
\end{conjecture}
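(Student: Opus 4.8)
The plan is to prove the two inequalities making up the equality separately. That the minimum is at most $(k-1)!$ is immediate from \cref{theorem: vincular pi_i-k minimal unavoidable set}: the set $\Pi_{i,k}$ is unavoidable and has cardinality $(k-1)!$. So the entire content of the conjecture is the matching lower bound, namely that every $\Pi \subseteq \overline{S_k}$ with $|\Pi| \le (k-1)!-1$ is \emph{avoidable}; equivalently, for each such $\Pi$ there must exist arbitrarily long cyclic permutations that avoid $\Pi$.

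First I would recast the problem. Identify each $\opi \in \overline{S_k}$ with its underlying permutation $\pi \in S_k$, and write $P = \{\pi \in S_k : \opi \in \Pi\}$, so $|P| = |\Pi|$; since a totally vincular pattern is detected only inside consecutive blocks of $k$ entries, a cyclic permutation $[\sigma]$ of length $n \ge k$ avoids $\Pi$ exactly when the reduction of every cyclically consecutive $k$-window of $[\sigma]$ lies in $S_k \setminus P$. The structural input I would use is the partition of $S_k$ into rotation orbits: a permutation has distinct entries, so it is fixed by no nontrivial rotation, hence every orbit has size exactly $k$ and there are precisely $k!/k = (k-1)!$ orbits (the cyclic permutations $[S_k]$). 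Since $|P| \le (k-1)!-1$ while each element of $P$ lies in a single orbit, $P$ misses at least one orbit entirely; fix such an orbit $\mathcal C = \{\rho^{(1)},\dots,\rho^{(k)}\}$, the $k$ rotations of some $\rho \in S_k$. It then suffices to build, for every $m \ge 1$, a cyclic permutation of length $n = km$ whose every $k$-window reduces to a rotation of $\rho$, since such a cyclic permutation automatically avoids $\Pi$.

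For the construction I would proceed as follows. Color the positions $0,1,\dots,n-1$ (read cyclically) by $\rho$ with period $k$, setting $c(i) = \rho_{(i \bmod k)+1}$; this is well defined because $k \mid n$, and it is injective on any run of $k$ consecutive positions. Form the digraph on these $n$ positions with an arc $i \to j$ whenever the cyclic distance from $i$ to $j$ is at most $k-1$ and $c(i) < c(j)$. This digraph is acyclic, since along any directed cycle the values of $c$ would strictly increase yet return to their starting value. Topologically sort it to get a bijection $w$ from the positions to $[n]$ with $i \to j \Rightarrow w(i) < w(j)$; because $c$ separates positions at distance $\le k-1$, this in fact forces $w(i) < w(j) \iff c(i) < c(j)$ for all such pairs. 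Reading the values $w(0),w(1),\dots,w(n-1)$ around the circle yields a cyclic permutation $[\tau] \in [S_n]$, and since any $k$ consecutive positions are pairwise within distance $k-1$, each $k$-window of $[\tau]$ is order isomorphic to the corresponding block of $k$ consecutive colors, which as a sequence is exactly a cyclic rotation of $\rho$. Thus every window of $[\tau]$ reduces into $\mathcal C$, so $[\tau]$ avoids $\Pi$; since this holds for all $m$, the classes $\Av_{km}[\Pi]$ are nonempty for arbitrarily large lengths, so $\Pi$ is avoidable, completing the lower bound.

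The step that I expect to demand the most care is the realizability used in the previous paragraph: in general a prescribed cyclic pattern of window types need not come from any permutation, because the local order constraints can be globally inconsistent. The point that makes it work is that the periodic coloring $c$ is itself the certificate of consistency---it directly witnesses acyclicity of the constraint digraph---so nothing further is needed. Conceptually, the whole statement amounts to computing the minimum feedback arc set of the $k$-regular transition digraph on vertex set $S_{k-1}$, with each $\pi \in S_k$ an arc from the reduction of $\pi_1\cdots\pi_{k-1}$ to the reduction of $\pi_2\cdots\pi_k$: the $(k-1)!$ rotation orbits of $S_k$ decompose its arc set into $(k-1)!$ edge-disjoint closed walks of length $k$, so any feedback arc set has size at least $(k-1)!$, while $\Pi_{i,k}$ attains it by \cref{theorem: vincular pi_i-k minimal unavoidable set}. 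The explicit construction above is just the statement ``traverse, $m$ times, the closed walk all of whose arcs lie outside $\Pi$.''
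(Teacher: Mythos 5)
The paper does not actually prove this statement: it is posed as a conjecture and explicitly left open in \cref{section: conclusion}, so there is no proof of record to compare yours against. Your argument, however, appears to be correct, and its two halves are essentially already present in the paper. The upper bound is \cref{theorem: vincular pi_i-k minimal unavoidable set}. For the lower bound, your key observation is the pigeonhole on rotation orbits: since a permutation with distinct entries has trivial rotational stabilizer, $\overline{S_k}$ splits into exactly $k!/k=(k-1)!$ orbits $[\opi]$, each of size $k$; hence any $\Pi$ with $|\Pi|\le (k-1)!-1$ meets at most $(k-1)!-1$ orbits, so it is disjoint from some orbit $[\opi]$ and is contained in $\overline{S_k}\setminus[\opi]$. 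By \cref{proposition: allow totally vincular cyclic permutation maximum avoidable subset} that superset is avoidable, and avoidability passes to subsets (the monotonicity the paper itself invokes before \cref{proposition: upper bound minimal unavoidable sets}). So the stated conjecture follows with no new construction at all, which is worth flagging to the authors.

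The one step in your write-up that needs care is your replacement for the paper's construction. In the constraint digraph you must read ``cyclic distance'' symmetrically, so that every pair of positions lying in a common window of $k$ consecutive positions is joined by exactly one arc; only then does a topological order $w$ of the (acyclic) digraph satisfy $w(i)<w(j)\iff c(i)<c(j)$ on such pairs. If arcs are placed only for forward distance at most $k-1$, a topological sort is not forced to reverse the comparisons for which no arc exists, and the iff can fail. This is easily repaired, and in any case dispensable, since the explicit ``$m$ vertically shifted copies of $\pi$'' permutation in the proof of \cref{proposition: allow totally vincular cyclic permutation maximum avoidable subset} already realizes, for every $m\ge 2$, a cyclic permutation all of whose $k$-windows reduce into $[\opi]$. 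Finally, note that your argument establishes only the formal statement (the minimum cardinality equals $(k-1)!$); the stronger informal assertion preceding the conjecture, that the sets $\Pi_{i,k}$ and $\Pi_{i,k}^c$ are \emph{the} smallest unavoidable subsets, is a uniqueness claim that would require additional work.
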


\section{Maximum avoidable sets of totally vincular patterns}\label{section: max avoidable sets of totally vincular patterns}
The dual notion for a minimal unavoidable set $\Pi\subseteq\overline{S_k}$ is a \emph{maximal avoidable} set of patterns, an avoidable subset of $\overline{S_k}$ that, when any other vincular pattern $\opi \in \overline{S_k}$ is added, is no longer avoidable.

In this section, we determine the size of a \emph{maximum avoidable} set $\Pi\subseteq \overline{S_k}$, an avoidable subset of $\overline{S_k}$ whose cardinality is greater than or equal to any other avoidable subset of $\overline{S_k}$.
Clearly, all maximum avoidable sets are maximal avoidable sets.

\begin{theorem}\label{theorem: maximum avoidable set of totally vincular length k}
The maximum cardinality of an avoidable subset of $\overline{S_k}$ is $k!-k$.
\end{theorem}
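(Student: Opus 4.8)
The plan is to prove the two matching bounds: exhibit an avoidable subset of $\overline{S_k}$ of size $k!-k$, and show no larger avoidable set exists. Throughout I will use that $|\overline{S_k}|=k!$; that $[\sigma]$ avoids a totally vincular pattern $\overline{\pi}\in\overline{S_k}$ exactly when no length-$k$ consecutive block (``window'') of $[\sigma]$, read once around the circle, reduces to $\pi$; and that a set of patterns is avoidable precisely when its avoidance class is nonempty for arbitrarily large $n$.

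\emph{Lower bound.} Let $\Pi^{\ast}\subseteq\overline{S_k}$ be the set of all totally vincular patterns except the $k$ cyclic rotations of $\iota_k$, namely except $\overline{12\cdots k},\ \overline{23\cdots k1},\ \ldots,\ \overline{k12\cdots(k-1)}$. These $k$ patterns are pairwise distinct for every $k\ge 1$, so $|\Pi^{\ast}|=k!-k$. I would then check that $[\iota_n]$ avoids $\Pi^{\ast}$ for every $n$: for $n<k$ there are no length-$k$ windows and the claim is vacuous, while for $n\ge k$ the $n$ windows of $[\iota_n]=[12\cdots n]$ are exactly these $k$ rotations of $\iota_k$ (the non-wrapping windows all reduce to $\iota_k$, and the $k-1$ windows that wrap around the circle give the $k-1$ nonidentity rotations). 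Hence $\Av_n[\Pi^{\ast}]\ne\emptyset$ for all $n$, so $\Pi^{\ast}$ is avoidable, and the maximum size of an avoidable subset of $\overline{S_k}$ is at least $k!-k$.

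\emph{Upper bound.} The heart of the argument is the lemma: every $[\sigma]\in[S_n]$ with $n\ge k$ has at least $k$ distinct length-$k$ window patterns. To prove it, let $m$ be the position of the largest entry, $\sigma_m=n$. For each $p\in\{1,\ldots,k\}$, the length-$k$ window starting at position $m-p+1$ (indices mod $n$) is a genuine window because $n\ge k$, it contains $\sigma_m$ in its $p$-th slot, and since $\sigma_m=n$ is the largest entry of $[n]$ it is the largest entry of that window; hence the pattern this window reduces to has the symbol $k$ in position $p$. Letting $p$ range over $1,\ldots,k$ produces $k$ window patterns whose top symbol $k$ sits in $k$ different positions, so they are $k$ distinct patterns. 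Now suppose $\Pi\subseteq\overline{S_k}$ is avoidable; then $\Av_n[\Pi]\ne\emptyset$ for arbitrarily large $n$, so fix some $n\ge k$ and $[\sigma]\in\Av_n[\Pi]$. Since $[\sigma]$ avoids every pattern in $\Pi$, no window of $[\sigma]$ reduces to a member of $\Pi$, so the $\ge k$ distinct window patterns of $[\sigma]$ all lie in $\overline{S_k}\setminus\Pi$. Therefore $|\Pi|\le |\overline{S_k}|-k=k!-k$, which together with the lower bound gives the claimed value $k!-k$.

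I anticipate no serious obstacle: the proof is short once one notices that the maximum entry of a cyclic permutation forces $k$ distinct consecutive patterns (this also explains why the bound is exactly $k!-k$ and why $[\iota_n]$ is extremal). The only points needing a little care are the cyclic index bookkeeping in the lemma (confirming the $k$ windows through the position of the maximum are distinct windows precisely because $n\ge k$), matching ``$[\sigma]$ avoids a totally vincular pattern'' with ``no window of $[\sigma]$ reduces to it,'' and the elementary verification that the windows of $[\iota_n]$ are exactly the $k$ rotations of $\iota_k$.
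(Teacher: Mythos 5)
Your proposal is correct, and the lower bound (taking $\overline{S_k}$ minus the $k$ rotations of $\iota_k$ and checking that $[\iota_n]$ avoids it) is exactly the construction in the paper. The upper bound, however, is argued differently. The paper partitions $\overline{S_k}$ into the $k$ classes $\Pi_{i,k}$ of patterns with the symbol $1$ in position $i$, observes that a set of size at least $k!-k+1$ must contain one of these classes entirely by pigeonhole, and then invokes the already-proven fact (\cref{theorem: vincular pi_i-k minimal unavoidable set}) that each $\Pi_{i,k}$ is unavoidable. You instead prove directly that every cyclic permutation of length $n\ge k$ realizes at least $k$ distinct window patterns, by sliding a window across the position of the maximum entry $n$ and noting that the symbol $k$ lands in $k$ different positions; hence the complement of any avoidable set has at least $k$ elements. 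The two arguments are dual — the paper tracks the minimum entry $1$ through its $k$ windows, you track the maximum — and both are essentially a pigeonhole on one extremal entry. What your version buys is self-containedness: it does not depend on the unavoidability of $\Pi_{i,k}$ from the previous section, and it isolates the clean combinatorial fact that any long cyclic permutation has at least $k$ distinct consecutive patterns, which also explains why $[\iota_n]$ is extremal. What the paper's version buys is economy in context, since the unavoidability of $\Pi_{i,k}$ has already been established there and the partition into the sets $\Pi_{i,k}$ connects the maximum-avoidable-set bound directly to the minimal unavoidable sets of the preceding section.
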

\begin{proof}
We first show $k!-k$ is an upper bound on the cardinality of an avoidable set.
Suppose $\Pi\subseteq\overline{S_k}$ has cardinality $|\Pi|\geq k!-k+1$.
Partition $\overline{S_k}$ into $k$ sets depending on the index of the 1; in other words, using the notation from \cref{section: unavoidable sets of totally vincular patterns}, partition $\overline{S_k}$ into
\[ \overline{S_k} = \bigcup_{i=1}^k \Pi_{i,k}.\]
Each $\Pi_{i,k}$ has cardinality $(k-1)!$, and as $\Pi$ has cardinality $|\Pi| \geq k((k-1)!-1)+1$, we have $\Pi$ contains $\Pi_{i,k}$ for some $1 \leq i \leq k$.
This set is unavoidable by \cref{theorem: vincular pi_i-k minimal unavoidable set}.
Hence, any avoidable subset has cardinality at most $k!-k$.

We now show $k!-k$ can be achieved as the cardinality of an avoidable set.
Consider the set
\[ \Pi = \overline{S_k}\setminus\{\overline{12\cdots k},\overline{23\cdots k 1},\overline{34\cdots k 12},\dots,\overline{k12\cdots k-1}\}.\]
In other words, to avoid $\Pi$, every consecutive subsequence of $k$ elements must be order isomorphic to a rotation of $\iota_k = 12\cdots k$.
But observe $\iota_n=12\cdots n$ possesses this property, so it avoids $\Pi$.
Thus $\Pi$, which has cardinality $k!-k$, is a maximum avoidable set contained in $\overline{S_k}$.
\end{proof}

For $\pi \in S_k$, let $[\opi]$ denote the set of totally vincular patterns in $\overline{S_k}$ corresponding to the rotations of $\pi$, i.e., corresponding to $[\pi]$.
We have the following proposition on maximum avoidable subsets of $\overline{S_k}$.
\begin{proposition}\label{proposition: allow totally vincular cyclic permutation maximum avoidable subset}
For $\pi \in S_k$, the set $\Pi=\overline{S_k}\setminus[\opi]$ is a maximum avoidable set contained in $\overline{S_k}$.
\end{proposition}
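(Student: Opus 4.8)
The plan is to prove that $\Pi=\overline{S_k}\setminus[\opi]$ is avoidable and has cardinality $k!-k$; by \cref{theorem: maximum avoidable set of totally vincular length k} the latter is the largest possible cardinality of an avoidable subset of $\overline{S_k}$, so an avoidable set of this size is automatically a maximum avoidable set. For the cardinality count, note that the $k$ rotations of a permutation $\pi\in S_k$ are pairwise distinct: if two rotations agreed, $\pi$ would be invariant under a nontrivial cyclic shift of its positions and would therefore repeat a value, contradicting that $\pi$ is a permutation. Hence $|[\opi]|=k$ and $|\Pi|=k!-k$.

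Next I would record the translation already implicit in the proof of \cref{theorem: maximum avoidable set of totally vincular length k}: for $n\ge k$, a cyclic permutation $[\sigma]\in[S_n]$ avoids $\Pi$ if and only if every window of $k$ cyclically consecutive entries of $[\sigma]$ is order isomorphic to some rotation of $\pi$, i.e., to an element of $[\pi]$. So it suffices to exhibit, for arbitrarily large $n$, a cyclic permutation of length $n$ with this ``every $k$-window is a rotation of $\pi$'' property.

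The construction I would use is ``$m$ perturbed copies of $\pi$'': fix $m\ge 1$ and a real $\varepsilon\in(0,1/m)$, index the $mk$ positions by pairs $(c,\ell)$ with $c\in[m]$, $\ell\in[k]$, arranged cyclically as $(1,1),(1,2),\dots,(1,k),(2,1),\dots,(m,k)$, assign to position $(c,\ell)$ the height $\pi_\ell+c\varepsilon$, and let $[\sigma]$ be the reduction of the resulting sequence of distinct reals (equivalently, assign the integer $(\pi_\ell-1)m+c$). Now I would check all $k$-windows. A window beginning at $(c,1)$ is precisely copy $c$, with heights $\pi_1+c\varepsilon,\dots,\pi_k+c\varepsilon$, order isomorphic to $\pi$. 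A window beginning at $(c,\ell)$ with $2\le\ell\le k$ occupies local positions $\ell,\dots,k$ of copy $c$ followed by local positions $1,\dots,\ell-1$ of copy $c+1$ (copies indexed cyclically mod $m$); its heights are $\pi_\ell+c\varepsilon,\dots,\pi_k+c\varepsilon,\pi_1+c'\varepsilon,\dots,\pi_{\ell-1}+c'\varepsilon$, where $c'=c+1$, or $c'=1$ in the wrap-around case $c=m$. Comparisons inside the copy-$c$ block reproduce the order of $\pi_\ell,\dots,\pi_k$, comparisons inside the copy-$c'$ block reproduce the order of $\pi_1,\dots,\pi_{\ell-1}$, and for a cross comparison (entry $\pi_a+c\varepsilon$ from copy $c$ against entry $\pi_b+c'\varepsilon$ from copy $c'$, with $a\ge\ell>b$) one has $\pi_a+c\varepsilon<\pi_b+c'\varepsilon\iff\pi_a-\pi_b<(c'-c)\varepsilon\iff\pi_a<\pi_b$, since $\pi_a-\pi_b$ is a nonzero integer and $|c'-c|\varepsilon<1$. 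Hence the window is order isomorphic to the rotation $\pi_\ell\cdots\pi_k\pi_1\cdots\pi_{\ell-1}$ of $\pi$. Since every $k$-window of $[\sigma]$ is of one of these forms, $[\sigma]$ avoids $\Pi$; letting $m\to\infty$ shows $\Pi$ is avoidable, and together with the cardinality count and \cref{theorem: maximum avoidable set of totally vincular length k} this finishes the proof.

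The one delicate point — though it is easily handled — is the wrap-around window straddling copy $m$ and copy $1$: there the two relevant perturbations differ by $(m-1)\varepsilon$ rather than by $\varepsilon$, so one must take $\varepsilon$ small enough (any $\varepsilon<1/m$ works, and the integer version with step $m$ makes this automatic) that even this larger discrepancy cannot reverse a strict inequality between two distinct integers. Everything else is routine bookkeeping over the $\ell=1$ and $2\le\ell\le k$ cases.
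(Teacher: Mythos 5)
Your proposal is correct and follows essentially the same route as the paper: count $|\Pi|=k!-k$, invoke \cref{theorem: maximum avoidable set of totally vincular length k}, and exhibit avoidance at lengths $n=mk$ via the same ``$m$ vertically shifted copies of $\pi$'' construction (your integer assignment $(\pi_\ell-1)m+c$ is exactly the paper's $\sigma$). The only difference is that you spell out the window-by-window verification that the paper dismisses as ``easy to see,'' which is a welcome addition but not a different argument.
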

\begin{proof}
As $|\Pi|=k!-k$, by \cref{theorem: maximum avoidable set of totally vincular length k} it suffices to show that $\Pi$ is avoidable, i.e., for arbitrarily large lengths $n$, we have $\left|\Av_n[\Pi]\right| > 0$.
We will show this holds when $n$ is a multiple of $k$.

Suppose $n=mk$ for a positive integer $m$.
Then consider the cyclic permutation $[\sigma]\in [S_n]$ given by
\begin{align*}
    \sigma = \, & m(\pi_1-1)+1, \dots, m(\pi_k-1)+1, m(\pi_1-1)+2, \dots, m(\pi_k-1)+2, \dots, \\
    & m(\pi_1-1)+m, \dots, m(\pi_k-1)+m.
\end{align*}
See \cref{fig: maximum avoidable blowup schematic} for a schematic diagram of this construction of $\sigma$.
\begin{figure}[htbp]
    \centering
    \begin{tikzpicture}[scale=0.5,thick]
        \filldraw (1,1) circle (4pt);
        \filldraw (2,9) circle (4pt);
        \filldraw (3,13) circle (4pt);
        \filldraw (4,5) circle (4pt);
        \draw[dotted] (5,1) -- (5,16);
        \filldraw (6,2) circle (4pt);
        \filldraw (7,10) circle (4pt);
        \filldraw (8,14) circle (4pt);
        \filldraw (9,6) circle (4pt);
        \draw[dotted] (10,1) -- (10,16);
        \filldraw (11,3) circle (4pt);
        \filldraw (12,11) circle (4pt);
        \filldraw (13,15) circle (4pt);
        \filldraw (14,7) circle (4pt);
        \draw[dotted] (15,1) -- (15,16);
        \filldraw (16,4) circle (4pt);
        \filldraw (17,12) circle (4pt);
        \filldraw (18,16) circle (4pt);
        \filldraw (19,8) circle (4pt);
    \end{tikzpicture}
    \caption{Schematic diagram for the plot of $\sigma$ when $\pi = 1342$ and $m=4$.}
    \label{fig: maximum avoidable blowup schematic}
\end{figure}
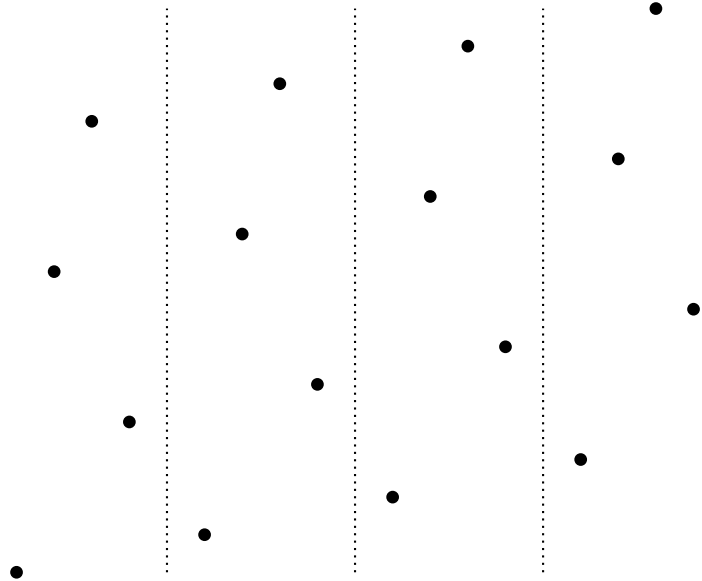

Essentially $\sigma$ consists of $\pi$ repeated $m$ times, vertically scaled with some slight shifts in order to not repeat any elements.
It is easy to see that any consecutive subsequence of $k$ elements in $[\sigma]$ is order isomorphic to some element of $[\opi]$, and thus $[\sigma]\in \Av_n[\Pi]$, so $\Pi$ is avoidable.
\end{proof}

We pose the following question.
\begin{question}\label{question: maximum avoidable sets}
Are there any other maximum avoidable subsets of $\overline{S_k}$, or are all maximum avoidable sets $\Pi\subset \overline{S_k}$ of the form $\Pi=\overline{S_k}\setminus[\opi]$ for some $\pi \in S_k$?
\end{question}

\section{Conclusion and open questions}\label{section: conclusion}
There are numerous avenues for further research regarding vincular pattern avoidance of cyclic permutations.
The enumeration corresponding to the last three Wilf equivalence classes of pairs of length 3 totally vincular cyclic patterns, listed in \cref{subsection: multiple vincular 3 doubleton}, is an open area for research.
One may also work on enumerating vincular cyclic patterns of length 4 with more than one vinculum, as well as sets of these length 4 patterns.
Non-vincular cyclic patterns of length 5 have not been enumerated yet, so this is also an open area of research, and a good first step before addressing vincular cyclic patterns of length 5.
Lastly, one could also consider enumerating avoidance classes of sets of patterns that do not all have the same length.

Regarding unavoidable sets, a characterization of unavoidable sets or even minimal unavoidable sets may be quite difficult, so a good first step would be to understand minimum unavoidable sets.
To this end, we leave \cref{conjecture: minimum unavoidable set} as an open problem to determine the minimum cardinality of an unavoidable set $\Pi\subseteq \overline{S_k}$.
Finding better bounds on the number of minimal unavoidable subsets of $\overline{S_k}$, i.e., improving \cref{proposition: upper bound minimal unavoidable sets}, could also be of interest.
Similarly, it may be quite difficult to characterize all the avoidable sets or even all the maximal avoidable sets, though empirical data suggest there are significantly fewer maximal avoidable sets than there are minimal unavoidable sets, and hence classifying maximal avoidable sets may be more tractable.
Characterizing the more restrictive class of maximum avoidable subsets is thus a good first step towards better understanding the avoidability of sets $\Pi\subseteq\overline{S_k}$, and we leave \cref{question: maximum avoidable sets} as an open area for research.

\section*{Acknowledgements}
We sincerely thank Amanda Burcroff and Benjamin Gunby for their input throughout the research process.
We also thank Daniel Zhu and Colin Defant for helpful ideas and input.
We would like to thank Prof.\ Joe Gallian for his editing feedback and operation of the Duluth REU program.
This research was conducted at the University of Minnesota Duluth Mathematics REU and was supported, in part, by NSF-DMS Grant 1949884 and NSA Grant H98230-20-1-0009.
Additional support was provided by the CYAN Mathematics Undergraduate Activities Fund.

\bibliographystyle{plain}
\bibliography{ref_vc}

\end{document}